\documentclass{siamltex}

\usepackage{appendix}
\usepackage{amsmath}
\usepackage{psfrag}
\usepackage{amssymb}
\usepackage{comment}          % for comment}
\usepackage{float}
\usepackage[english]{babel}   % Must be placed here
\usepackage{array}
\usepackage{enumerate}
\usepackage{calc}
\usepackage{xspace}
\usepackage{graphicx,color,subfigure}
%====================To be removed============================
%\usepackage[notref,notcite]{showkeys} % To see crossreferences.
%====================To be removed============================
%\newcommand{\cblue}[1]{{\color{blue}{#1}}}
%\newcommand{\cred}[1]{{\color{red}{#1}}}

%%%%%%%%%%%%%%%%%%%%%%%%%%%%%%%%%%%%%%%%%%%%%%%%%%%%%%%%%%%%%%%%%%
%  Personal macros
%%%%%%%%%%%%%%%%%%%%%%%%%%%%%%%%%%%%%%%%%%%%%%%%%%%%%%%%%%%%%%%%%%

\newcommand{\SCAL}{{\cdot}}
\newcommand\eqdef{:=}

\newcommand\T{\mathcal{T}}
\newcommand\F{\mathcal{F}}

\newcommand{\jump}[1]{[\![#1]\!]}

\newcommand{\tnorm}[1]{\vert\hspace{-0.3mm}\Vert#1\Vert\hspace{-0.3mm}\vert}

\newcommand{\normal}{n}

\newcommand{\tr}{^{\rm T}}

%%%%%%%%%%%%%%%%%%%%%%%%%%%%%%%%%%%%%%%%%%%%%%%%%%%%%%%%%%%%%%%%%%
%=======================NEW ENVIRONMENTS=======================================
%\theoremstyle{plain}

%\theoremstyle{remark}

%=======================NEW ENVIRONMENTS=======================================
\title{Robust error estimates for stabilized finite element
  approximations of the two dimensional
  Navier-Stokes equations with application to
  implicit large eddy simulation}

\author{Erik Burman\thanks{Department of Mathematics, 
University College London, Gower Street, London, 
UK--WC1E  6BT, 
United Kingdom; ({\tt e.Burman@ucl.ac.uk})}
}
\markboth{E. BURMAN}{
Robust error estimation for the two dimensional Navier-Stokes' equations}
\begin{document}

\maketitle

\begin{abstract}
We consider error estimates in weak parametrised norms for stabilized finite element
approximations of the two-dimensional Navier-Stokes' equations. These
weak norms can be related to the norms of certain filtered quantities,
where the parameter of the norm, relates to the filter width. Under
the assumption of the existence of a certain decomposition
of the solution, into large eddies and fine scale fluctuations, the constants of the
estimates are proven to be independent of both the Reynolds number and the
Sobolev norm of the exact solution. Instead they exhibit exponential
growth with a coefficient proportional to the maximum gradient of the
large eddies. The error estimates are on a posteriori form, but using
Sobolev injections valid on finite element spaces and the properties
of the stabilization operators the residuals may be
upper bounded uniformly, leading to robust a priori error estimates.
 \end{abstract}

\begin{keywords}
Navier-Stokes' equations, stability, error estimates, large eddy simulation,
finite element methods, stabilization
\end{keywords}

\begin{AMS}
\end{AMS}

\pagestyle{myheadings}
\thispagestyle{plain}
\markboth{E. BURMAN}{
Robust error estimates for the two dimensional
  Navier-Stokes equations}

\section{Introduction}
In this paper we will be interested in stabilized finite element
methods in the context of so called implicit large eddy
simulation (ILES), see \cite{Bo90}. This is a numerical approach to
the computation of turbulent flow where no modelling of the Reynolds
stresses is performed on the continuous level. Instead the Navier-Stokes' equations are approximated
numerically using a method that dissipates sufficient energy on the scale of the
mesh size. This eliminates the buildup of energy that creates
spurious oscillations in any energy conservative approximation
method. It has been argued that the truncation error of such methods
by itself may 
act as a subgrid model \cite{AND08, MRG06} and there exists numerical evidence that ILES methods work for the
simulation of two dimensional turbulence, provided
back scatter effects are not strong \cite{KTW12}. There is also
numerical evidence of the potential for adaptive LES/DNS
driven by adaptive, stabilized finite element simulations, see
\cite{Hoff05,HJ06} and \cite{PCH10}.

Our objective in this paper
is to provide a numerical analysis for stabilized finite element
methods under minimal regularity assumptions and to provide sufficient
conditions on the exact solution for the derivation of rigorous error
estimates that are independent of both the Reynolds number and Sobolev
norms of the exact solution. It is well
known that provided the exact solution is sufficiently smooth the
approximate solution $u_h$ of the Navier-Stokes' equations on velocity-pressure form
can be proved to satisfy
estimates
of the type
\begin{equation}\label{classic_estimate}
\|u - u_h\|_{L^2(\Omega)} \lesssim
e^{\|\nabla u\|_\infty} h^{\frac32}
  |u|_{L^2(I;H^2(\Omega))},
\end{equation}
if a consistent stabilized finite element method with piecewise affine
approximation is used.
Here $u$ denotes the flow velocity and $h$ the mesh size.
See \cite{HS90, BF07} for examples of analyses of Navier-Stokes' equations on
velocity-pressure form and \cite{JS86, LS00} for analyses on
velocity-vorticity form. We also give a proof of
\eqref{classic_estimate} for one the methods proposed herein in appendix.
Here we use the notation $a \lesssim b$ for $a \leq C b$ with $C$ a
constant independent of the physical parameters of the problem, unless
they can be expected to have $O(1)$ contribution, it can also include
some dependence on initial data, that may be assumed to be $O(1)$. We
will also use $a \sim b$ for $a \lesssim b$ and $b \lesssim a$.
Note that there is no explicit dependence on the viscosity in the
estimate
\eqref{classic_estimate}.
For this estimate to be useful the included Sobolev norms must be small, which
rarely is the case in the high Reynolds number regime and hence the 
dependence of the viscosity enters in an implicit manner. The purpose
of the present paper is to propose an alternative approach, where the
estimate is indeed independent of the Reynolds number, both in the
sense that the estimate is free from inverse power of the viscosity in
the upper bound, but also that the dependence on unknown Sobolev norms of the exact
solution is strongly reduced. It does not seem possible to eliminate
this dependence completely, due to the possible presence of
backscatter. 
Observe that the presence of $\|\nabla u\|_{\infty}$ in the
exponential of \eqref{classic_estimate} reflects the effect of diverging
characteristics in the transport equation and is present already in
the linear convection--diffusion equation at high P\'eclet numbers. 
We may define a timescale
for the flow separation, $\tau:=\|\nabla
u\|_{\infty}^{-1}$. The reason this time scale becomes so small
is that it will be the smallest timescale of the flow and hence equal to the micro time, because of fine scale
fluctuations of the velocity. From the physical point of view it is argued that \emph{LES will be
successful
for flows where both the quantities of interest and the
rate-controlling processes are determined by the resolved large
scales}, see Pope \cite{Pope04} for a discussion. We will use this
idea as a starting point for our assumptions on the flow.

To derive error estimates for a numerical method we need the following:
\begin{itemize}
\item[--] continuous dependence on data, independent of the exact solution;
\item[--] some smooth quantity that we can apply approximation estimates to.
\end{itemize}
At a first glance both these prerequisites appear to fail for the
two-dimensional Navier-Stokes' equation. The first fails because of the
presence of the exponential factor and the second fails because
Sobolev norms of the exact solution can be huge for small viscosities.
The following three points allow us to break this deadlock:
\begin{enumerate}
\item the use of a parametrized weak norm, corresponding to measuring
  the error in filtered quantities of the solution;
\item introduction of an assumption on the structure of the exact
  solution that is sufficient for an implicit large eddy simulation
  to be robust;
\item a stabilized finite element method, giving enhanced a priori
  control of residual quantities in the high Reynolds regime.
\end{enumerate}
The idea of measuring error in filtered quantities was considered in
\cite{DJ04,DJL04}, but the estimates were not robust in the Reynolds
number and the constant included high order Sobolev norms of the exact solution.
In \cite{Bu12} weak norm estimates were used in order
to derive robust estimates for the Burgers' equation, where the
constant in the right hand side only depended on initial data.
The second point, which was not necessary in the case of the Burgers'
equation, reflects the difficulty to characterise the solution
structure in higher dimension and the ensuing need of some structural
assumptions in order to rule out strong backscatter effects. The third
point allows us to control residual quantities independent of the viscosity.

As a first approximation it is reasonable to assume that
for a solution to be amenable to large eddy simulation, there are 
relatively smooth eddies, with large associated Reynolds number, 
containing the bulk of the energy and small
scale fluctuations that may vary rapidly in space, but carries a
negligible part of the energy. To make this precise, we assume that
there exists a decomposition of the exact solution in the spatially
slowly varying large scales and an arbitrarily rough fine scale, with
small energy. 
\[
u = \bar u + u',\quad \bar u, u' \in W^{1,\infty}(\Omega).
\]
We then assume that the Reynolds number associated to the large
scales, $\overline{Re}$
may be large, but $\|\bar u\|_{W^{1,\infty}(Q)} \sim 1$, whereas for
the fine scales $\|u'\|_{W^{1,\infty}(Q)}$ may be large, but the
energy small. To give a precise meaning to small here, we 
introduce a global time scale for the flow, defined using the large
scales
\[
\tau_F := \|\bar u\|_{W^{1,\infty}(Q)}^{-1} \sim 1.
\]
This is in agreement with the statement that rate controlling
processes are determined by the large scale.
Using the viscosity coefficient we may make the following assumption
on the energy content of the small scales
\[
\|u'\|^2_{\infty}  \sim \nu/\tau_F.
\]
The length scale based on $|u'|$ and $\tau_F$ writes
\[
l':=|u'| \tau_F \sim |u'| \frac{\nu}{|u'|^2}
\]
and it
follows that the small scale Reynolds number is
\begin{equation}
Re' := \frac{u' l'}{\nu} \sim \frac{|u'| \nu}{|u'| \nu} \sim 1.
\end{equation}
Alternatively one may assume that the fine scale Reynolds number is
one and that the large scale characteristic time, is the globally relevant
time scale and then derive the bound on the energy. We will refer to
the above as the \emph{large eddy assumption}. Under this assumption
we prove the following bound on the approximate velocities
\begin{equation}\label{ILES_estimate}
\sup_{t \in (0,T)} \|u-u_h\|_{L^2(\Omega)} \lesssim h^{\frac12}.
\end{equation}
The hidden constant in the above estimate only depends on initial data
(maximum initial vorticity) and the mesh geometry, but is
independent of the Reynolds number and Sobolev norms of the exact
solution. The discussion is limited to two space dimensions and hence 
we do not properly speaking address the question of turbulent flows.

Let us end this introductory discussion by emphasising that what we compute is an
approximation to the solution of the Navier-Stokes' equations. For
this approximate solution we can
prove that provided $\tau_F$ is not too small, corresponding to slowly
varying large scale velocity field, the filtered part of the vorticity
is stable under perturbations resulting in robust error estimates in
weak norms for vorticity. Using these estimates we may then control the
$L^2$-norm of the velocity error as shown above. Herein
our main concern will be the high mesh Reynolds number case
\[
Re_h := \frac{U_0 h}{\nu} >1,
\]
where $U_0:=\|u_h(\cdot,0)\|_{L^\infty(\Omega)} \sim 1$ denotes a
characteristic velocity of the flow, but many results are independent
of the mesh Reynolds number. It will always be explicitly stated when a
result only holds in the high Reynolds regime. If the local Reynolds
number is low, other approaches than those presented herein might be
more appropriate. Let us also point out that another feature of our
estimates is that they provide the first error estimates with an order
in $h$ for  nonlinear stabilization schemes, satisfying a discrete
maximum principle, in two space dimensions.

We will consider the Navier-Stokes' equations written on
vorticity-velocity form. Let $\Omega$ be the unit square and assume
that the boundary conditions are periodic in both cartesian
directions. The $L^2$-scalar product over some
space-time domain will be denoted $(\cdot,\cdot)_X$ with associated
norm $\|\cdot\|_X$ where the subscript may be dropped for $X=\Omega$. 
Define the time interval $I:=(0,T)$ and the space-time domain $Q:= \Omega \times I$.
The equations then writes,
\begin{align}\nonumber
\partial_t \omega + \nabla \SCAL (u \omega) - \nu \Delta \omega &= 0,
\mbox{ in } Q,\\ \label{NSeq}
-\Delta \Psi & = \omega \mbox{ in } Q,\\ \nonumber
u & = \mbox{rot } \Psi \mbox{ in } Q,\\ \nonumber
\omega(x,0) &= \omega_0,
\end{align}
with $\omega_0 \in L^\infty(\Omega)$.
The associated weak formulation takes the form, for $t>0$, find $(\omega,\Psi)
\in H^1(\Omega) \times H^1(\Omega)\cap L_*(\Omega)$ such that
\begin{align}\label{NSweak1}
(\partial_t \omega,v)_\Omega + a(u;\omega,v)&= 0,\\ \label{NSweak2}
(\nabla\Psi,\nabla \Phi)_\Omega & = (\omega,\Phi)_\Omega,\\
u & = \mbox{rot } \Psi, \nonumber
\end{align}
for all $(v,\Phi) \in H^1(\Omega) \times H^1(\Omega)\cap
L_*(\Omega)$, where the semi-linear form $a(\cdot;\cdot,\cdot)$ is
defined by
\[
a(u;\omega,v):=(\nabla \SCAL (u \omega),v)_\Omega +
(\nu \nabla \omega, \nabla v)_\Omega.
\]
This problem is known to be well-posed, but a priori error estimates
on the solution are in general strongly dependent on the viscosity
coefficient reflecting the possible poor stability of the equations in
the high Reynolds number regime. 
\section{Finite element discretization}
\label{sec:disc.set}

Let $\{\T_h\}_{h>0}$ be a family of affine, simplicial Delaunay meshes of $\Omega$. 
We assume that the meshes are kept fixed
in time and that the family $\{\T_h\}_{h>0}$ is
quasi-uniform. Mesh faces are collected in the set $\F$. For a smooth enough function $v$ that
is possibly double-valued at $F\in\F$ with $F=\partial T^- \cap
\partial T^+$, we define its jump at
$F$ as $\jump{v}\eqdef v|_{T^-}-v|_{T^+}$, and we fix the unit normal
vector to $F$, denoted by $\normal_F$, 
as pointing from $T^-$ to $T^+$. The
arbitrariness in the sign of $\jump{v}$ is irrelevant in what
follows. Define $V_h$ to be the standard space of piecewise polynomial,
continuous periodic functions,
\[
V^k_h\eqdef \{v_h \in H^1(\Omega): v_h\vert_{K} \in P_k(K); \forall K
\in \mathcal{T}_h; 
\, v_h \mbox{
  periodic in $x$ and $y$} \}.
\]
The set of gradients of functions in $V^k_h$ will be denoted by
$$W^{k-1}_h:=\{w_h = \nabla v_h;\, v_h \in V_h^k\}.$$
Let $L\eqdef L^2(\Omega)$ and set $L_* \eqdef \{q\in L; \; \int_\Omega
q=0\}$. Let $V_*\eqdef V^k_h \cap L_*$. We let $\pi_L$ denote the
$L^2$-projection on $V_h^k$ and $\pi_V$ the $H^1$-projection
\[
(\nabla \pi_V u, \nabla v_h)_\Omega = (\nabla u, \nabla v_h)_\Omega
\quad \forall v_h \in V_h \mbox{ and }
\int_\Omega (\pi_Vu - u) ~\mbox{d}x = 0.
\]
We recall that the following approximation estimates hold for $\pi_L$
and $\pi_V$,
\begin{equation}\label{approx1}
\|\pi_L u - u_h\| + h \|\nabla (\pi_L u - u)\| \leq c_0 h^{s}
|u|_s, \mbox{ with } 1 \leq s \leq k+1 
\end{equation}
and
\begin{equation}\label{approx2}
\|\pi_V u - u_h\| + h \|\nabla (\pi_V u - u)\| \leq  c_1 h^{s}
|u|_s, \mbox{ with } 1 \leq s \leq k+1. 
\end{equation}
We consider continuous finite elements with $k=1$ to
discretize  the vorticity $\omega$ in space and $k=1,2$ for the stream function
$\Psi$. The discrete velocity is given elementwise by $u_h\vert_K := \mbox{rot }
\Psi_h:= (\partial_y \Psi_h, - \partial_x \Psi_h)$. Note that using this definition $\nabla \cdot u_h = 0$ in
$\Omega$, i.e. the discrete velocity is globally divergence free. We
discretise in space using a stabilized finite element method. For
$t>0$ find $(\omega_h,\Psi_h) \in V^1_h \times V^l_*$ , with $l=1,2$, such that
\begin{align}\label{NSFEM1}
(\partial_t \omega_h,v_h)_M + a(u_h;\omega_h,v_h) +s(u_h;\omega_h,v_h) &= 0
,\\
(\nabla\Psi_h,\nabla \Phi_h)_\Omega - (\omega_h,\Phi_h)_\Omega&=0, \label{NSFEM2}\\
u_h - \mbox{rot } \Psi_h & = 0,\nonumber
\end{align}
for all $(v_h,\Phi_h) \in V_h \times V_*$ and with initial data $w_0
:= \pi_L \omega(\cdot,0)$. Here $s(\cdot;\cdot,\cdot)$
denotes a stabilization operator that is linear in its last
argument and $(\partial_t \omega_h,v_h)_M$ denotes the bilinear form
defining the mass matrix, this operator either coincides with
$(\cdot,\cdot)_\Omega$ or is defined as $(\cdot,\cdot)_\Omega$ approximated
using nodal quadrature, i.e. so called mass lumping. We will assume the stabilization term satisfies
\begin{equation}\label{stab_bound1}
\inf_{v_h \in V_h} \|h^{\frac12}(u_h\cdot \nabla \omega_h - v_h)\| \lesssim
s(u_h,\omega_h;\omega_h)^{\frac12} \lesssim h^{\frac12}
(U_0+\|u_h\|_{L^\infty(\Omega)}) \|\nabla \omega_h\|
\end{equation}
and
\begin{equation}\label{stab_bound2}
s(u_h,\omega_h;v_h) \lesssim h^\frac12(U_0+\|u_h\|_{L^\infty(\Omega)})
s(u_h,\omega_h;\omega_h)^\frac12 \|\nabla v_h\|.
\end{equation}
The formulation \eqref{NSFEM1}-\eqref{NSFEM2} satisfies the following stability
estimates
\begin{lemma}\label{FEMstability}
\begin{equation}\label{enstrophy_stability}
\sup_{t \in I} \|\omega_h(\cdot,t)\|_M^2 + 2\|\nu^{\frac12} \nabla
\omega_h\|^2_{Q} + 2\int_I s(u_h;\omega_h,\omega_h)~\mbox{d}t \leq \|\omega_h(\cdot,0)\|_M^2,
\end{equation}
and if exact integration is used for $(\cdot,\cdot)_M$, 
\begin{equation}\label{energy_stability}
\|u_h(\cdot,T)\|_M^2 + 2\|\nu^{\frac12}
\omega_h\|^2_{Q} = \|u_h(\cdot,0)\|_M^2-2\int_0^T s(u_h; \omega_h, \Psi_h) ~\mbox{d}t,
\end{equation}
\begin{equation}\label{ubound_infty}
\|u_h(\cdot,t)\|_{L^{\infty}(\Omega)} \leq c_q
\|\omega_h(\cdot,t)\|_{L^q(\Omega)},\quad q>2
\end{equation}
and for $l=1$, 
\begin{equation}\label{timederstab}
\int_0^T\|\nabla \partial_t \omega_h\| \mbox{d}t \lesssim 
\int_0^T (h^{-\frac32} (U_0 +\|u_h\|_{L^{\infty}(\Omega)})
  s(u_h,\omega_h,\omega_h)^{\frac12} + \nu h^{-2} \|\nabla \omega_h\| ) \mbox{d}t.
\end{equation}
\end{lemma}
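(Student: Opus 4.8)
The four estimates are essentially independent, so the plan is to treat them in turn, reserving the bulk of the effort for \eqref{timederstab}. For the enstrophy bound \eqref{enstrophy_stability} I would simply test \eqref{NSFEM1} with $v_h=\omega_h$. The mass term yields $\tfrac12\tfrac{d}{dt}\|\omega_h\|_M^2$, the viscous term yields $\nu\|\nabla\omega_h\|^2$, and $s(u_h;\omega_h,\omega_h)$ is nonnegative since the left inequality in \eqref{stab_bound1} exhibits it as a squared seminorm. The convective term vanishes: because $\nabla\SCAL u_h=0$ and the boundary conditions are periodic, $(\nabla\SCAL(u_h\omega_h),\omega_h)_\Omega=(u_h\SCAL\nabla\omega_h,\omega_h)_\Omega=\tfrac12\int_\Omega u_h\SCAL\nabla(\omega_h^2)=0$. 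Integrating in time over $(0,t)$ and taking the supremum gives the claim.

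For the energy identity \eqref{energy_stability} I would exploit the stream-function structure. Differentiating \eqref{NSFEM2} in time and choosing $\Phi_h=\Psi_h$ gives $\tfrac12\tfrac{d}{dt}\|\nabla\Psi_h\|^2=(\partial_t\omega_h,\Psi_h)_\Omega$; since $u_h=\mbox{rot }\Psi_h$ forces $\|u_h\|=\|\nabla\Psi_h\|$ and exact integration gives $(\partial_t\omega_h,\Psi_h)_M=(\partial_t\omega_h,\Psi_h)_\Omega$, this equals $\tfrac12\tfrac{d}{dt}\|u_h\|_M^2$. I would then take $v_h=\Psi_h$ in \eqref{NSFEM1} (admissible in the vorticity test space). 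The convective contribution again vanishes, now because after integration by parts $(u_h\SCAL\nabla\omega_h,\Psi_h)_\Omega=-(\omega_h,u_h\SCAL\nabla\Psi_h)_\Omega$ and $u_h\SCAL\nabla\Psi_h=\mbox{rot }\Psi_h\SCAL\nabla\Psi_h=0$ pointwise. For the viscous term I would first verify $\int_\Omega\omega_h=0$ for all $t$ by testing \eqref{NSFEM1} with the constant (using periodicity and $s(u_h;\omega_h,1)=0$), so that $\omega_h\in V_*$ is admissible in \eqref{NSFEM2}; taking $\Phi_h=\omega_h$ there converts $\nu(\nabla\omega_h,\nabla\Psi_h)_\Omega$ into $\nu\|\omega_h\|^2$. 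Collecting terms and integrating over $(0,T)$ yields \eqref{energy_stability}.

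The pointwise bound \eqref{ubound_infty} is an elliptic-regularity statement. By Galerkin orthogonality $\Psi_h=\pi_V\Psi$, where $\Psi$ solves $-\Delta\Psi=\omega_h$ with periodic, zero-mean data, so $u_h=\mbox{rot }\pi_V\Psi$. I would bound $\|u_h\|_{L^\infty(\Omega)}\leq\|\nabla(\pi_V\Psi-\Psi)\|_{L^\infty(\Omega)}+\|\nabla\Psi\|_{L^\infty(\Omega)}$, estimate the continuous part by $W^{2,q}$ elliptic regularity followed by the two-dimensional Sobolev injection $W^{1,q}\hookrightarrow L^\infty$ valid for $q>2$, giving $\|\nabla\Psi\|_{L^\infty(\Omega)}\lesssim\|\omega_h\|_{L^q(\Omega)}$, and control the projection error through the $W^{1,\infty}$-stability of $\pi_V$ on quasi-uniform meshes (a discrete Sobolev injection), which reduces it to the same quantity with constant $c_q$.

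The time-derivative bound \eqref{timederstab} is the crux and where I expect the real difficulty. Writing $\dot\omega_h:=\partial_t\omega_h\in V_h$ and using the spectral equivalence of the (possibly lumped) mass form with $L^2$, I would estimate $\|\dot\omega_h\|$ in duality: for $v_h\in V_h$, \eqref{NSFEM1} gives $(\dot\omega_h,v_h)_M=-(u_h\SCAL\nabla\omega_h,v_h)_\Omega-\nu(\nabla\omega_h,\nabla v_h)_\Omega-s(u_h;\omega_h,v_h)$. The viscous term is bounded by $\nu\|\nabla\omega_h\|\,\|\nabla v_h\|\lesssim\nu h^{-1}\|\nabla\omega_h\|\,\|v_h\|$ via an inverse inequality, and the stabilization by \eqref{stab_bound2} followed by the same inverse inequality, producing the contributions $\nu h^{-1}\|\nabla\omega_h\|$ and $h^{-\frac12}(U_0+\|u_h\|_{L^\infty(\Omega)})s(u_h;\omega_h,\omega_h)^{\frac12}$ to $\|\dot\omega_h\|$; a final inverse estimate $\|\nabla\dot\omega_h\|\lesssim h^{-1}\|\dot\omega_h\|$ then recovers the stated powers $h^{-2}$ and $h^{-\frac32}$ after integration over $(0,T)$. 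The main obstacle is the convective term $(u_h\SCAL\nabla\omega_h,v_h)_\Omega$. I would split $u_h\SCAL\nabla\omega_h=(u_h\SCAL\nabla\omega_h-w_h)+w_h$ with $w_h\in V_h$ the minimizer in the left inequality of \eqref{stab_bound1}; the fluctuation part satisfies $\|u_h\SCAL\nabla\omega_h-w_h\|\lesssim h^{-\frac12}s(u_h;\omega_h,\omega_h)^{\frac12}$ and is absorbed into the stabilization contribution. The genuinely delicate point is the resolved part $(w_h,v_h)_\Omega$, which is not controlled by $s$ by itself; dominating it by $h^{-\frac32}(U_0+\|u_h\|_{L^\infty(\Omega)})s^{\frac12}$ will require the full approximation and consistency properties of the stabilization together with the high mesh-Reynolds hypothesis $Re_h>1$, and I anticipate this matching to be the step demanding the most care.
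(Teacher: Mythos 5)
Your treatment of \eqref{enstrophy_stability}, \eqref{energy_stability} and \eqref{ubound_infty} is correct and essentially the paper's own: the enstrophy bound by testing with $\omega_h$, the energy identity by testing \eqref{NSFEM1} with $\Psi_h$ and the time-differentiated \eqref{NSFEM2} with the appropriate discrete functions (your verifications that $\omega_h$ has zero mean and that $\|u_h\|=\|\nabla\Psi_h\|$ are the right fill-ins), and the $L^\infty$ bound via the auxiliary problem $-\Delta\tilde\Psi=\omega_h$, the $W^{1,\infty}$-stability of the Ritz projection (the paper's citation of Rannacher--Scott) and a Mazya-type elliptic estimate $\|\tilde\Psi\|_{W^{1,\infty}(\Omega)}\leq c_q\|\omega_h\|_{L^q(\Omega)}$ for $q>2$, which is exactly your route.

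For \eqref{timederstab}, however, you have correctly located the crux and then left it unproven, and the repair you gesture at is not the right one. The paper does not first bound $\|\partial_t\omega_h\|$ in $L^2$ and then apply an inverse inequality. Instead it introduces the specific test function $\xi_h\in V_h$ defined by $(\xi_h,v_h)_M=(\nabla\partial_t\omega_h,\nabla v_h)_\Omega$ for all $v_h\in V_h$, which satisfies $\|\xi_h\|\lesssim h^{-1}\|\partial_t\nabla\omega_h\|$, and takes $v_h=\xi_h$ in \eqref{NSFEM1}, so that the mass term produces $\|\partial_t\nabla\omega_h\|^2$ directly. The payoff of this choice is precisely the resolved part of the convective term that blocks you: writing $(u_h\SCAL\nabla\omega_h,\xi_h)_\Omega=(u_h\SCAL\nabla\omega_h-v_h,\xi_h)_\Omega+(v_h,\xi_h)_M+\bigl[(v_h,\xi_h)_\Omega-(v_h,\xi_h)_M\bigr]$, the definition of $\xi_h$ converts $(v_h,\xi_h)_M$ into $(\partial_t\nabla\omega_h,\nabla v_h)_\Omega$, and here the hypothesis $l=1$ --- which your proposal never uses --- enters: since $u_h=\mbox{rot }\Psi_h$ and $\nabla\omega_h$ are both elementwise constant, $u_h\SCAL\nabla\omega_h$ is elementwise constant, hence $\nabla v_h\vert_K=\nabla(v_h-u_h\SCAL\nabla\omega_h)\vert_K$ and an elementwise inverse inequality gives $\|\nabla v_h\|\lesssim h^{-1}\|v_h-u_h\SCAL\nabla\omega_h\|$. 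Thus the resolved part, like the fluctuation and the quadrature error $(h^2|\nabla v_h|,|\nabla\xi_h|)_\Omega$, is bounded by $h^{-1}\|\partial_t\nabla\omega_h\|\inf_{v_h\in V_h}\|u_h\SCAL\nabla\omega_h-v_h\|\lesssim h^{-\frac32}\|\partial_t\nabla\omega_h\|\,s(u_h;\omega_h,\omega_h)^{\frac12}$ by \eqref{stab_bound1}.

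In your duality framework this mechanism is unavailable: against an arbitrary test function, $(w_h,v_h)_\Omega$ cannot be dominated by $h^{-\frac12}s^{\frac12}\|v_h\|$, since generically $\|w_h\|\sim\|u_h\SCAL\nabla\omega_h\|$, which is $O(1)$ and not controlled by the stabilization; passing through the intermediate quantity $\|\partial_t\omega_h\|$ destroys the structure that the gradient transfer onto $v_h$ exploits. Nor do ``consistency properties of the stabilization'' or the hypothesis $Re_h>1$ rescue the step: the lemma makes no mesh-Reynolds assumption, and the paper invokes $Re_h>1$ only later, in Proposition \ref{DMPapriori}, to absorb the $\nu h^{-2}$ contribution of \eqref{timederstab} --- not in the proof of Lemma \ref{FEMstability} itself. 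The missing idea, in short, is the pair (discrete duality function $\xi_h$ for $\|\partial_t\nabla\omega_h\|$, piecewise constancy of $u_h\SCAL\nabla\omega_h$ for $l=k=1$).
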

\begin{proof}
Inequality \eqref{enstrophy_stability} is immediate by taking $v_h =
\omega_h$ in \eqref{NSFEM1}. Inequality \eqref{energy_stability} is
obtained
by taking $v_h = \Psi_h$ in the equation \eqref{NSFEM1} and
deriving the equation \eqref{NSFEM2} in time and taking
 $\Phi_h = \omega_h$. For the inequality \eqref{ubound_infty},
 consider the auxiliary problem, $-\Delta \tilde \Psi = \omega_h$ in $\Omega$ and
 note that by \cite{RS82} there holds
\[
\|u_h(\cdot,t)\|_{L^{\infty}(\Omega)} \lesssim \|\tilde \Psi(\cdot,t)\|_{W^{1,\infty}(\Omega)}
\]
and adapting the analysis of \cite{Maz09} we have for the (simpler) case or periodic boundary conditions,
\[
\|\tilde \Psi(\cdot,t)\|_{W^{1,\infty}(\Omega)} \leq c_q
\|\omega_h(\cdot,t)\|_{L^q(\Omega)},\, q>2.
\]
To prove \eqref{timederstab} finally we introduce a function $\xi_h
\in V_h$ such that
\[
(\xi_h,v_h)_M = (\nabla \partial_t \omega_h, \nabla v_h)_\Omega,\quad
\forall v_h \in V_h,
\]
it follows by taking $v_h=\xi_h$ and using the Cauchy-Schwarz inequality
followed by an inverse inequality that
\begin{equation}\label{inv_xi}
(\xi_h,\xi_h)_M^{\frac12} \sim \|\xi_h\| \lesssim h^{-1} \|\partial_t \nabla \omega_h\|.
\end{equation}
Observe that by norm equivalence on discrete spaces the $L^2$-norm
defined using nodal quadrature is equivalent to the consistent $L^2$-norm.
Taking $v_h = \xi_h$ in \eqref{NSFEM1} yields
\[
\|\partial_t \nabla \omega_h\|^2 = -(u_h \cdot \nabla \omega_h,
\xi_h)_\Omega-(\nu \nabla \omega_h, \nabla \xi_h)_\Omega- s(u_h,\omega_h,\xi_h).
\]
We may then apply the Cauchy-Schwarz inequality in the second term
of the right hand side and \eqref{stab_bound2} in the last term,
followed by inverse inequalities on $\|\nabla \xi_h\|$ and the
estimate \eqref{inv_xi}. For
the first term we write, using the properties of $\xi_h$ and the bound 
$$
|(v_h,\xi_h)_M - (v_h,\xi_h)_\Omega| \lesssim (h^2 |\nabla v_h|,|\nabla \xi_h|)_\Omega,
$$
\[
|(u_h \cdot \nabla \omega_h,
\xi_h)_\Omega,| \lesssim |(u_h \cdot \nabla \omega_h - v_h,
\xi_h)_\Omega| + |(\partial_t \nabla \omega_h, \nabla v_h)_\Omega| + (h^2 |\nabla
v_h|,|\nabla \xi_h|)_\Omega
\]
Since both $u_h$ and $\nabla \omega_h$ are constant per
element $\nabla v_h\vert_K = \nabla (v_h - u_h \cdot \nabla
\omega_h)\vert_K$. Using inverse inequalities and the bound \eqref{inv_xi} on $\xi_h$ we
have
\[
 |(u_h \cdot \nabla \omega_h - v_h,
\xi_h)|+ |(\partial_t \nabla \omega_h, \nabla v_h)| + (h^2 |\nabla
v_h|,|\nabla \xi_h|) \lesssim h^{-1} \|\partial_t \nabla \omega_h\|
\|u_h \cdot \nabla \omega_h - v_h\|.
\]
The claim follows by the inequality \eqref{stab_bound1}, \eqref{inv_xi} and finally by integrating in time.
\end{proof}\\
It follows from \eqref{energy_stability} that the method is energy consistent if
$s(u_h;\omega_h,\Psi_h)=0$.
Taking the difference of the formulations \eqref{NSweak1} -
\eqref{NSweak2}  (with
$v=v_h$) and \eqref{NSFEM1} - \eqref{NSFEM2} and
setting $e_\omega = \omega-\omega_h$ and $e_{\Psi} = \Psi-\Psi_h$, the following consistency relation holds
\begin{align}\nonumber
(\partial_t e_\omega+u\SCAL\nabla e_\omega+\mbox{rot } e_{\Psi} \SCAL
\nabla \omega_h,v_h)_\Omega + (\nu \nabla e_\omega,\nabla v_h)_\Omega &= (\partial_t
\omega_h,v_h)_M - (\partial_t
\omega_h,v_h)_\Omega\\& \quad + s(u_h,\omega_h;v_h) \label{NSpert1}
\mbox{ in } Q,\\ 
(\nabla e_{\Psi},\nabla \Phi_h)_\Omega - (e_\omega,\Phi_h)_\Omega & = 0\mbox{ in } Q.\nonumber
\end{align}
As mentioned in the introduction, if the solution $(u,\omega)$ is
smooth one may prove
an error estimate that is robust with respect to $\nu$ using standard
linear theory and perturbation arguments. For the methods we consider
herein, this result is
an extension of the works in \cite{LS00} and
\cite{BF07} and we state it here only with the
dominant terms present. For the readers convenience we briefly outline
the proof using one stabilization operator (defined in  equation \eqref{s_cd}) in the appendix.
\begin{proposition}\label{smooth_apriori}
Let $(u,\omega)$ be a smooth solution of \eqref{NSweak1}-\eqref{NSweak2} and
$(u_h,\omega_h)$ be the solution of \eqref{NSFEM1}-\eqref{NSFEM2},
where the stabilization operator satisfies the additional weak
consistency property
\[
s(u_h;\pi_L \omega,\pi_L \omega) \leq c(u,\omega) h^{\frac32}
\]
then for $l=1,2$
\[
\|(u - u_h)(\cdot,T)\| + \|(\omega - \omega_h)(\cdot,T)\| \lesssim c_\omega (h^{\frac32} |\omega|_{L^2(I;H^2(\Omega))} + h^l |\Psi|_{L^\infty(I;H^{l+1}(\Omega))})
\] 
where $c_\omega := e^{\|\nabla
  \omega\|_{L^\infty(Q)} T}$.
In addition there holds for the stabilization operator
\[
 s(u_h;\omega_h,\omega_h)^{\frac12} \lesssim c_\omega (h^{\frac32} |\omega|_{L^2(I;H^2(\Omega))} + h^l |\Psi|_{L^\infty(I;H^{l+1}(\Omega))}).
\]
\end{proposition}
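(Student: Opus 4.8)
The plan is to follow the classical route for stabilized Galerkin methods: split the error with the projections $\pi_L$ and $\pi_V$, test the consistency relation \eqref{NSpert1} with the discrete part of the vorticity error, and close the resulting energy inequality with Gronwall's lemma. Write $e_\omega = \eta + \xi$ and $e_\Psi = \eta_\Psi + \xi_\Psi$ with $\eta := \omega - \pi_L\omega$, $\xi := \pi_L\omega - \omega_h \in V^1_h$, $\eta_\Psi := \Psi - \pi_V\Psi$ and $\xi_\Psi := \pi_V\Psi - \Psi_h \in V_*$. Before touching the transport equation I would dispose of the elliptic coupling: subtracting \eqref{NSFEM2} from its continuous counterpart and using that $\pi_V$ is the $H^1$-projection gives $(\nabla\xi_\Psi,\nabla\Phi_h)_\Omega = (e_\omega,\Phi_h)_\Omega$ for all $\Phi_h \in V_*$; testing with $\Phi_h = \xi_\Psi$ and invoking the Poincar\'e inequality on $L_*$ yields $\|\nabla\xi_\Psi\| \lesssim \|e_\omega\|$, whence
\[
\|u - u_h\| = \|\mbox{rot } e_\Psi\| \le \|\nabla\eta_\Psi\| + \|\nabla\xi_\Psi\| \lesssim \|e_\omega\| + h^l|\Psi|_{H^{l+1}(\Omega)}.
\]
This reduces the velocity estimate to a bound on the vorticity error and, crucially, lets me replace $\mbox{rot } e_\Psi$ by $\|e_\omega\|$ wherever it appears.

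Next I would test \eqref{NSpert1} with $v_h = \xi$, substitute $e_\omega = \eta + \xi$, and collect the coercive contributions $\tfrac12\tfrac{d}{dt}\|\xi\|^2 + \nu\|\nabla\xi\|^2$ on the left. Several terms simplify by skew-symmetry: since $u$ is divergence free and the boundary is periodic, $(u\SCAL\nabla\xi,\xi)_\Omega = 0$, and since $\mbox{rot } e_\Psi = u - u_h$ is also divergence free, the self-interaction of the coupling term, $(\mbox{rot } e_\Psi\SCAL\nabla\xi,\xi)_\Omega$, vanishes as well. The remaining coupling contribution is handled by writing $\nabla\omega_h = \nabla\omega - \nabla\eta - \nabla\xi$: the $\nabla\omega$ part is bounded by $\|\nabla\omega\|_{L^\infty}\|\mbox{rot } e_\Psi\|\|\xi\| \lesssim \|\nabla\omega\|_{L^\infty}\|e_\omega\|\|\xi\|$, which is exactly the term that will drive the Gronwall coefficient, while the $\nabla\eta$ part is an approximation term (controlled by an $L^2$--$L^\infty$--$L^2$ H\"older split using the smoothness of $\omega$). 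The stabilization term on the right is rearranged as $s(u_h;\omega_h,\xi) = s(u_h;\pi_L\omega,\xi) - s(u_h;\xi,\xi)$, moving $+s(u_h;\xi,\xi)$ onto the coercive side and bounding $s(u_h;\pi_L\omega,\xi)$ by the Cauchy--Schwarz inequality for the semi-definite form $s$ together with the weak consistency hypothesis. The mass-lumping defect is estimated through the bound $|(v_h,\xi)_M - (v_h,\xi)_\Omega| \lesssim (h^2|\nabla v_h|,|\nabla\xi|)_\Omega$ and the time-derivative control \eqref{timederstab}.

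After using Young's inequality to absorb $\nu\|\nabla\xi\|^2$ and $s(u_h;\xi,\xi)$ into the left-hand side, the differential inequality takes the shape $\tfrac{d}{dt}\|\xi\|^2 \lesssim \|\nabla\omega\|_{L^\infty}\|\xi\|^2 + \mathcal{D}(t)$, where the data term $\mathcal{D}$ collects the approximation errors from $\eta$ and $\eta_\Psi$, the stabilization consistency and the mass defect, and integrates via \eqref{approx1}--\eqref{approx2} and the stability estimates of Lemma \ref{FEMstability} to the target rate $h^{3}|\omega|^2_{L^2(I;H^2(\Omega))} + h^{2l}|\Psi|^2_{L^\infty(I;H^{l+1}(\Omega))}$. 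Gronwall's lemma then produces the factor $c_\omega = e^{\|\nabla\omega\|_{L^\infty(Q)}T}$ and, after taking square roots and adding the velocity estimate from the first step, the first claimed bound; the stabilization estimate follows by retaining $\int_I s(u_h;\xi,\xi)\,\mathrm{d}t$ on the coercive side and applying the triangle inequality $s(u_h;\omega_h,\omega_h)^{\frac12} \le s(u_h;\xi,\xi)^{\frac12} + s(u_h;\pi_L\omega,\pi_L\omega)^{\frac12}$ with the weak consistency once more.

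The hard part will be the nonlinear coupling term $(\mbox{rot } e_\Psi\SCAL\nabla\omega_h,\xi)_\Omega$: the splitting must be arranged so that only $\|\nabla\omega\|_{L^\infty}$, the large-scale gradient, enters the exponential, the dangerous self-interaction is removed by the divergence-free structure of $\mbox{rot } e_\Psi$, and the velocity error remains tied back to $\|e_\omega\|$ through the elliptic coupling of the first step without reintroducing inverse powers of $\nu$.
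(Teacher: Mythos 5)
Your overall route is the same as the paper's: dispose of the elliptic coupling first (Galerkin orthogonality plus Poincar\'e, giving $\|u-u_h\| \lesssim \|e_\omega\| + h^l|\Psi|_{H^{l+1}(\Omega)}$), then run an energy argument on the discrete part of the vorticity error and close with Gronwall. The paper splits with the Lagrange interpolant $I_h$ for $\Psi$ and $\pi_L$ for $\omega$ and tests \eqref{NSFEM1} directly rather than the perturbation relation \eqref{NSpert1}, but these choices are interchangeable; your skew-symmetry cancellations are correct, your treatment of the coupling piece $(\mbox{rot}\, e_\Psi \cdot \nabla\omega,\xi)$ reproduces the paper's term $I$ after integration by parts, and your handling of $s(u_h;\pi_L\omega,\xi)$ via Cauchy--Schwarz and weak consistency is exactly the paper's term $III$.

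There is, however, a genuine gap at the crux of the rate. After substituting $e_\omega = \eta + \xi$ with $\eta := \omega - \pi_L\omega$, the advective cross term $(u\cdot\nabla\eta,\xi)_\Omega$ survives (in the paper's arrangement it appears as $II = (\omega - \pi_L\omega,\, u_h\cdot\nabla e_{h,\omega})$), and your sketch never mentions it. This is precisely the term where $h^{3/2}$ is won or lost: every elementary bound available to you --- $\|u\|_{L^\infty}\|\nabla\eta\|\,\|\xi\|$ directly, or $\|\eta\|\,\|u\|_{L^\infty}\|\nabla\xi\|$ after integration by parts combined with an inverse inequality --- gives only $O(h)\,|\omega|_{H^2}$ for piecewise linears, which destroys the claimed rate. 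The paper's fix is the one mechanism your plan leaves unused: by $L^2$-orthogonality of $\pi_L$ one may subtract an arbitrary $v_h$ and write the term as $(\eta,\, u_h\cdot\nabla\xi - v_h)$, then bound
\[
(\eta,\, u_h\cdot\nabla\xi - v_h) \leq \|h^{-\frac12}\eta\| \inf_{v_h\in V_h}\|h^{\frac12}(u_h\cdot\nabla\xi - v_h)\| \lesssim h^{\frac32}|\omega|_{H^2(\Omega)}\, s(u_h;\xi,\xi)^{\frac12},
\]
using the \emph{lower} bound in \eqref{stab_bound1}, after which Young's inequality absorbs $s(u_h;\xi,\xi)$ into the coercive left-hand side. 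You never invoke \eqref{stab_bound1} anywhere --- your only uses of the stabilization are semi-definite Cauchy--Schwarz and the weak consistency hypothesis --- so in your argument the stabilization contributes nothing to the convergence rate and the data term $\mathcal{D}$ cannot integrate to $h^3|\omega|^2_{L^2(I;H^2(\Omega))}$. (A secondary, lesser point: your H\"older treatment of the $\nabla\eta$ piece of the coupling term imports $\|\omega\|_{W^{2,\infty}}$ rather than the stated $H^2$ seminorm; this is tolerable under the smoothness hypothesis, and the paper avoids it altogether by never splitting $\nabla\omega_h$, keeping the coupling as $(\omega,(u-u_h)\cdot\nabla e_{h,\omega})$ and integrating by parts once.)
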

Observe that the exponential factor here depends on $\|\nabla
\omega\|_{L^\infty(\Omega)}$, compared to $\|\nabla
u\|_{L^\infty(\Omega)}$ in \eqref{classic_estimate}. This is the prize we pay for
estimating the $L^2$-error of the vorticity. As we shall see below,
the use of weaker norms for the estimation of $\omega_h$ allows us to
revert back to the exponential factor of \eqref{classic_estimate} and
under the large eddy assumption, the exponential growth is moderate.
\section{Dual problem}
From the consistency relation \eqref{NSpert1} we deduce the following (homogeneous)
perturbation formulation for the evolution of $(e_\omega,e_{\Psi})$
\begin{align}\label{NSpert2}
(\partial_t e_\omega+ u\SCAL\nabla e_\omega+\mbox{rot } e_{\Psi} \SCAL
\nabla \omega_h,\varphi_1)_\Omega + (\nu \nabla e_\omega,\nabla \varphi_1)_\Omega &= 0
\mbox{ in } Q,\\
(\nabla e_{\Psi},\nabla \varphi_2)_\Omega - (e_\omega,\varphi_2)_\Omega & = 0\mbox{
  in } Q,
\end{align}
where $\varphi_1, \varphi_2$ are the solutions to a dual adjoint perturbation
equation related to the continuous equation \eqref{NSweak1}-\eqref{NSweak2}  and the
discretization \eqref{NSFEM1}-\eqref{NSFEM2}. Since the jump of the
tangential derivative of $\omega_h$ is zero, we may integrate by parts
in \eqref{NSpert2}, to arrive at the dual adjoint problem
\begin{align}\label{eq:dual1}
-\partial_t \varphi_1 - u \SCAL \nabla \varphi_1 - \varphi_2 - \nu \Delta
\varphi_1 & = 0\mbox{ in } Q,\\
-\Delta \varphi_2 - \nabla \omega_h \SCAL \mbox{rot } \varphi_1 &=
0\mbox{ in } Q, \label{eq:dual2}\\
\varphi_1(x,T) & = \xi_0(x) \mbox{ in } \Omega, \label{eq:dual3}
\end{align}
where $\xi_0(x)$ is some initial data to be fixed later, the choice of
$\xi_0$ determines the quantity of interest. 

A key result for the present analysis is the following stability
estimate for the dual adjoint solution
\begin{proposition}\label{prop:dualstability}
The following stability estimate holds for the solution $(\varphi_1,\varphi_2)$ of
\eqref{eq:dual1} - \eqref{eq:dual3},
\begin{align}\label{phi_1stab}
\sup_{t \in I} \|\nabla \varphi_1(\cdot,t)\| +  \|\nu^{\frac12} D^2
\varphi_1\|_{Q} \lesssim e^{\frac{T}{\tau_F}} \|\nabla \xi_0\|\\
\label{phi_2stab}
\int_I \|\nabla \varphi_2(\cdot,t)\| ~\mbox{d}t
 \leq e^{\frac{T}{\tau_F}} \int_I  \|\omega_h\|_{L^\infty(\Omega)}~\mbox{d}t\,
 \|\nabla \xi_0\|
\end{align}
where $\tau_F$ is defined in the proof. If the large eddy assumption
holds $\tau_F\sim 1$.
\end{proposition}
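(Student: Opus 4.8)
The plan is to run the adjoint system backwards in time and close a Gr\"onwall inequality for $\|\nabla\varphi_1\|$. Setting $s:=T-t$ turns \eqref{eq:dual1} into the forward convection--diffusion equation $\partial_s\varphi_1 - u\SCAL\nabla\varphi_1 - \varphi_2 - \nu\Delta\varphi_1 = 0$ with initial datum $\varphi_1|_{s=0}=\xi_0$. I would test this with $-\Delta\varphi_1$ and integrate over $\Omega$; by the periodic boundary conditions the time term produces $\tfrac12\tfrac{d}{ds}\|\nabla\varphi_1\|^2$, the viscous term yields the dissipation $\nu\|\Delta\varphi_1\|^2$, and one is left to control the convective contribution $-\int_\Omega(u\SCAL\nabla\varphi_1)\Delta\varphi_1$ and the coupling term $\int_\Omega\varphi_2\Delta\varphi_1$.

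The crucial observation, and the step I expect to carry the whole argument, is that the coupling term vanishes. Integrating by parts gives $\int_\Omega\varphi_2\Delta\varphi_1 = -\int_\Omega\nabla\varphi_2\SCAL\nabla\varphi_1$, and testing the elliptic equation \eqref{eq:dual2} with $\varphi_1$ yields $\int_\Omega\nabla\varphi_2\SCAL\nabla\varphi_1 = \int_\Omega(\nabla\omega_h\SCAL\mbox{rot }\varphi_1)\varphi_1$. Since $\mbox{rot }\varphi_1 = (\partial_y\varphi_1,-\partial_x\varphi_1)$ is pointwise orthogonal to $\nabla\varphi_1$, writing $\nabla\omega_h\SCAL\mbox{rot }\varphi_1 = \div(\omega_h\,\mbox{rot }\varphi_1)$ and integrating by parts once more leaves $-\int_\Omega\omega_h\,(\mbox{rot }\varphi_1\SCAL\nabla\varphi_1) = 0$. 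This is precisely the structural cancellation that removes any dependence on $\|\omega_h\|_{L^\infty(\Omega)}$ from the $\varphi_1$ estimate; without it the back-scatter term would ruin the bound.

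For the convection term I would split $u=\bar u+u'$ and treat the two parts asymmetrically. The large-scale part is integrated by parts to give $\int_\Omega\nabla\varphi_1\tr(\nabla\bar u)\nabla\varphi_1 - \tfrac12\int_\Omega(\div\bar u)|\nabla\varphi_1|^2$, both bounded by $\|\bar u\|_{W^{1,\infty}(Q)}\|\nabla\varphi_1\|^2$. The fine-scale part is instead kept in the form $-\int_\Omega(u'\SCAL\nabla\varphi_1)\Delta\varphi_1$ and estimated by Cauchy--Schwarz and Young as $\tfrac{1}{2\nu}\|u'\|^2_{L^\infty(Q)}\|\nabla\varphi_1\|^2 + \tfrac{\nu}{2}\|\Delta\varphi_1\|^2$, the last term being absorbed into the dissipation. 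Defining $\tau_F^{-1}$ as a fixed multiple of $\|\bar u\|_{W^{1,\infty}(Q)} + \nu^{-1}\|u'\|^2_{L^\infty(Q)}$ collects both contributions into $C\tau_F^{-1}\|\nabla\varphi_1\|^2$; the large eddy assumption $\|u'\|^2_\infty\sim\nu/\tau_F$ and $\|\bar u\|_{W^{1,\infty}}\sim 1$ then make $\tau_F\sim 1$. Gr\"onwall's inequality over $s\in(0,T)$ gives $\sup_t\|\nabla\varphi_1\|\lesssim e^{T/\tau_F}\|\nabla\xi_0\|$, and integrating the retained dissipation yields the $\|\nu^{1/2}\Delta\varphi_1\|_Q$ bound; the Hessian norm is recovered from the periodic identity $\|D^2\varphi_1\|=\|\Delta\varphi_1\|$, proving \eqref{phi_1stab}.

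Finally, \eqref{phi_2stab} follows directly: testing \eqref{eq:dual2} with $\varphi_2$ and using $\nabla\omega_h\SCAL\mbox{rot }\varphi_1=\div(\omega_h\,\mbox{rot }\varphi_1)$ gives $\|\nabla\varphi_2\|^2 = -\int_\Omega\omega_h\,\mbox{rot }\varphi_1\SCAL\nabla\varphi_2 \le \|\omega_h\|_{L^\infty(\Omega)}\|\nabla\varphi_1\|\,\|\nabla\varphi_2\|$, hence $\|\nabla\varphi_2\|\le\|\omega_h\|_{L^\infty(\Omega)}\|\nabla\varphi_1\|$; integrating in time and inserting the sup-bound on $\|\nabla\varphi_1\|$ produces the claimed estimate. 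The only genuine difficulty is the coupling cancellation together with the fine-scale absorption --- everything else is a routine backward parabolic energy argument --- and I would note that the integrations by parts require $\varphi_1$ to be regular enough, which holds for the smooth solution of this linear problem (or may be justified through a Galerkin approximation).
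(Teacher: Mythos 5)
Your proposal is correct and follows essentially the same route as the paper's proof: the same duality testing with $-\Delta\varphi_1$ and $\varphi_1$, the same structural cancellation of the coupling/vortex-stretching term via $\mathrm{rot}\,\varphi_1\cdot\nabla\varphi_1=0$, the same asymmetric $\bar u$/$u'$ splitting with the fine-scale part absorbed into the viscous dissipation, and the identical argument for $\varphi_2$. The only deviations are cosmetic: you time-reverse rather than integrate over $(t^*,T)$ and take a supremum over $t^*$, and your definition $\tau_F^{-1}\sim\|\bar u\|_{W^{1,\infty}}+\nu^{-1}\|u'\|_{L^\infty}^2$ is a slightly coarser upper bound for the paper's sharper definition via the largest positive eigenvalue of the matrix $\Lambda(\bar u,u',\nu)$ minimized over decompositions (which drops the rotational and divergence parts of $\nabla\bar u$), but both yield $\tau_F\sim 1$ under the large eddy assumption.
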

\begin{proof}
First multiply \eqref{eq:dual1} by $-\Delta
\varphi_1$  and \eqref{eq:dual2} by $\varphi_1$ and integrate over $Q^*:=\Omega \times (t^*,T)$, where $t^*$ is
a time to be chosen. By summing the two relations we
obtain
\[
\underbrace{(\partial_t \varphi_1, \Delta \varphi_1)_{Q^*}}_{I_1} + \underbrace{ (u\SCAL \nabla
\varphi_1, \Delta \varphi_1)_{Q^*}}_{I_2} + \underbrace{ (\nabla \omega_h \SCAL \mbox{rot } \varphi_1, \varphi_1)_{Q^*}}_{I_3} + \underbrace{\|\nu^{\frac12} \Delta
\varphi_1\|^2_{Q^*}}_{I_4}= 0.
\]
We will now treat the terms $I_1$-$I_4$ term by term. First note that
by integration by parts first in space and then integration in time we
have
\[
I_1 = -\frac12 \int_{t^*}^T \frac{d}{dt} \|\nabla
\varphi_1(\cdot,t)\|^2 \mbox{d}t = \frac12 \|\nabla
\varphi_1(\cdot,t^*)\|^2 - \frac12 \|\nabla
\xi_0\|^2.
\]
The second term is handled using the decomposition of $u$ in the large
scale and fine scale component and then an integration by parts only in the
large scale part. Here $\nabla_S u$ denotes the symmetric part of the gradient of
the vector $u$.
\begin{multline*}
I_2 =  -((\nabla_S \bar u - \frac12 (\nabla \SCAL \bar u)
\mathcal{I}_{2\times2})  \nabla\varphi_1,\nabla \varphi_1)_{Q^*}
%+
%\frac12 \sum_{i=1}^2
%((\nabla \SCAL \bar u) \partial_{x_i}  \varphi_1,\partial_{x_i} \varphi_1)_{Q^*} 
- (u' \SCAL \nabla \varphi_1,\Delta\varphi_1)_{Q^*} \\
\leq
%\underbrace{\frac12 (\max_{(x,t) \in Q^*}(\min \,\sigma( 2(\nabla_S \bar u) - \nab%la \SCAL \bar u \,
%\mathcal{I} )) + \nu^{-\frac12}
%\|u'\|_{L^\infty(Q^*)})}_{\tfrac12 \tau_F^{-1}}
 \int_Q (\Lambda(\bar
u,u',\nu) \nabla
\varphi_1)\tr \SCAL \nabla
\varphi_1 ~\mbox{d}x\mbox{d}t + \frac12 \|\nu^{\frac12} \Delta \varphi_1\|^2_{Q^*},
\end{multline*}
where $\Lambda(\bar
u,u',\nu)$ is a two by two, symmetric matrix defined by,
\[
\Lambda(\bar
u,u',\nu) = -\nabla_S \bar u + \frac12 \nabla \SCAL \bar u
\,\mathcal{I}_{2\times2}  + \frac{1}{2\nu} {u'}\tr u'.
\]
We now define the global timescale $\tau_F$ of the flow by
\[
(\tau_F)^{-1}:= \inf_{\bar u \in L^\infty(I;W^{1,\infty}(\Omega))} \|\sigma^+_p (\Lambda(\bar
u,u',\nu))\|_{L^{\infty}(Q)}.
\]
Here $\sigma^+_p$ denotes the largest positive eigenvalue of the
matrix. This results in a nontrivial minimization problem in
$L^\infty$. We leave the precise study of this problem for further
work and here simply observe that by computing the eigenvalues of the
symmetric part of the gradient tensor we may write
\[
(\tau_F)^{-1} \leq \inf_{\bar u}  J(\bar u,u')
\]
where 
\[
 J(\bar u,u') := \sup_{t \in I} \left(\sqrt{\|(\partial_{x_1} \bar u_1 - \partial_{x_2} \bar
  u_2
  )^2+(\partial_{x_2} \bar u_1 + \partial_{x_1} \bar u_2)^2\|_{L^\infty(\Omega)}} + \nu^{-1} \|u'\|^2_{L^\infty(\Omega)}\right).
\]
We observe that the global stability does not depend on the divergence
component or the rotational of $\bar u$, only on the other two components
of the velocity gradient matrix.
Since $u' = u - \bar u$, it follows that we can minimize over all large scale vector
fields $\bar u \in [W^{1,\infty}(\Omega)]$ and the infimum value
obtained is the optimal timescale of the flow. Under the assumptions
made in the introduction, that $\|\bar u\|_{W^{1,\infty}(\Omega)} \sim
1$ and $\nu^{-1} \|u'\|^2_{L^\infty(\Omega)} \sim 1$, for all $t$,
we immediately deduce that $\tau_F \sim 1$.

By an integration by parts and by using the relations
$\nabla \SCAL \mbox{rot } \varphi= 0$ and
$\nabla \varphi \SCAL \mbox{rot } \varphi = 0$ we have
\begin{equation*}
I_3 = -(\omega_h \nabla \SCAL \mbox{rot }
\varphi_1,\varphi_1)_{Q^*} -(\omega_h \mbox{rot } \varphi_1,\nabla
\varphi_1)_{Q^*} = 0.
\end{equation*}
Collecting the results for $I_1-I_3$ we have
\[
\|\nabla
\varphi_1(\cdot,t^*)\|_{L}^2 + \|\nu^{\frac12}\Delta
\varphi_1\|_{Q^*}^2 \leq \tau_F^{-1} \|\nabla
\varphi_1\|_{Q^*}^2+ \|\nabla
\xi_0\|_{L}^2.
\]
The inequality for $\varphi_1$ follows after a Gronwall's inequality
and by taking the supremum over $t^*$, resulting in
\[
\sup_{t \in I} \|\nabla
\varphi_1(\cdot,t)\|^2 + \|\nu^{\frac12}D^2
\varphi_1\|_{Q}^2 \lesssim e^{\frac{T}{\tau_F}} \|\nabla \xi\|^2.
\]
Elliptic regularity has been used for the second term.

For the bound on $\varphi_2$ multiply equation \eqref{eq:dual2} by
$\varphi_2$ and integrate over $\Omega$,
\[
\|\nabla \varphi_2(\cdot,t)\|^2 = -(\omega_h \mbox{rot } \varphi_1,\nabla
\varphi_2)_\Omega \leq \|\omega_h(\cdot,t)\|_{L^\infty(\Omega)} \|\nabla \varphi_1(\cdot,t)\|  \|\nabla \varphi_2(\cdot,t)\|.
\]
Then divide by $\|\nabla \varphi_2(\cdot,t)\|$, integrate in time
and use that
\[
\int_I \|\omega_h(\cdot,t)\|_{L^\infty(\Omega)} \|\nabla
\varphi_1(\cdot,t)\| ~\mbox{d}t \leq \int_I
\|\omega_h(\cdot,t)\|_{L^\infty(\Omega)} ~\mbox{d}t \,\sup_{t \in I} \|\nabla
\varphi_1(\cdot,t)\|.
\]
Finally use equation \eqref{phi_1stab} to bound the term in $\|\nabla
\varphi_1(\cdot,t)\|$.
\end{proof}\\
Note the dependence of $\omega_h$ in the bound \eqref{phi_2stab}. This
appearance of a finite element function in the stability estimate
shows that the global stability depends on the monotonicity of the
approximation scheme. However as we shall see, strict monotonicity is
not necessary, only $L^\infty$-control of the vorticity.
\section{A posteriori and a priori error estimates for the abstract method}
Let $e_\omega = \omega - \omega_h$ and let the filtered error $\tilde e_\omega$ be
defined as the solution to the problem
\begin{equation}\label{filtering}
-\delta^2 \Delta \tilde e_\omega + \tilde e_\omega = e_\omega.
\end{equation}
We introduce a norm on $\tilde e_\omega$ such that
$\tnorm{\tilde e_\omega}_\delta^2:=
\|\delta \nabla \tilde e_\omega\|^2 +\| \tilde e_\omega\|^2 = (e_\omega,\tilde
e_\omega)_\Omega$. This norm coincides with the $L^2$-norm for $\delta = 0$ and is
 related to the $H^{-1}$-norm for $\delta = 1$. By choosing
$\delta := \delta(h)$, i.e. by reducing the filter width with the mesh
size, we obtain a family of norms that become stronger as the mesh
size is reduced.

Using the above norm and the relations \eqref{NSpert1},
\eqref{eq:dual1}-\eqref{eq:dual2} as  well as the stability result of
Proposition \ref{prop:dualstability} we may derive a
posteriori estimates for the filtered quantity $\tilde e_\omega$. We here
derive the result for the abstract finite element element method
\eqref{NSFEM1}-\eqref{NSFEM2} and then show how these estimates can be
transformed into a priori error estimates, depending on the properties
of the stabilization operator $s(u_h,\omega_h;v_h)$. The use of weak
norms and stabilized finite element methods in the following estimates
draws on ideas from \cite{HMSW99} and \cite{Bu09, Bu12}.
\begin{theorem}(A posteriori error
  estimates)\label{est:a_posteriori}
\begin{align}
\tnorm{\tilde \omega - \tilde \omega_h}_\delta &\lesssim e^{\frac{T}{\tau_F}} \left(\frac{h}{\delta^2}
\right)^{\frac12} 
\sum_{i=0}^5 \mathcal{R}_i,\label{vorticity_apost}
\end{align}
with
\[
\mathcal{R}_0:= \|(\omega - \omega_h)(\cdot,0)\|,
\]
\[
\mathcal{R}_1 :=\int_0^T
\inf_{v_h \in V_h}\|h^{\frac12} (u_h \SCAL
  \nabla \omega_h - v_h)\|~\mbox{d}t,
\]
\[
\mathcal{R}_2:= \min(h,\nu^{\frac12}T^{\frac12}) \|\nu^{\frac12} \jump{\normal_F \SCAL
  \nabla \omega_h}\|_{\F\times I},
\]
\[
\mathcal{R}_3  :=  \int_0^T \|\omega_h(\cdot,t)\|_{L^\infty(\Omega)}
~\mbox{d}t  \min(c_0 \sup_{t \in I} \|\Psi_h(\cdot,t)\|_{\Delta,0},
c_1 h^{\frac12}
\sup_{t \in I} \|\omega_h(\cdot,t)\| )\\
\]
where 
\[
\|\Psi_h(\cdot,t)\|_{\Delta,s} := 
\|h^{s} \jump{n_F \cdot \nabla \Psi_h(\cdot,t)}\|_{\mathcal{F}}  + \inf_{v_h \in V_h^l}\left(\sum_{K\in
  \mathcal{T}_h} \|h^{\frac12+s}(\Delta
\Psi_h(\cdot,t) - v_h)\|^2_K\right)^{\frac12},
\]
\[
\mathcal{R}_4 := h^{\frac32} \int_0^T \|\partial_t \nabla
\omega_h\| ~\mbox{d}t
\]
and
\[
\mathcal{R}_5:=(U_0+\|u_h\|_{L^\infty(Q)})
\int_0^T s(u_h;\omega_h,\omega_h)^{\frac12} ~\mbox{d}t.
\]
The term $\mathcal{R}_4$ is omitted if the consistent mass matrix is
used. For the
velocities we have the estimate, for all $t \in I$,
\begin{equation}\label{velocity_apost}
\|(u - u_h)(\cdot,t)\| \lesssim \left(\|\Psi_h(\cdot,t)\|_{\Delta,\frac12}+ \tnorm{(\tilde \omega -
  \tilde \omega_h)(\cdot,t)}_1\right)
\end{equation}
where the second term in the right hand side may be a posteriori
bounded by taking $\delta=1$ in \eqref{vorticity_apost}.
\end{theorem}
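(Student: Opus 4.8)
The plan is to prove the vorticity bound \eqref{vorticity_apost} by a goal-oriented duality argument built on the adjoint problem \eqref{eq:dual1}--\eqref{eq:dual3} and its robust stability, Proposition \ref{prop:dualstability}, and then to obtain the velocity bound \eqref{velocity_apost} from a direct argument for the stream-function error. Throughout write $e_\omega=\omega-\omega_h$, $e_\Psi=\Psi-\Psi_h$ and fix the target time to be $T$; the argument at a generic $t\in I$ is identical after running the dual backwards from $t$. The key point of the weak norm is that it is a \emph{linear} functional of $e_\omega$: choosing the adjoint data $\xi_0:=\tilde e_\omega(\cdot,T)$, the defining relation $\tnorm{\tilde e_\omega}_\delta^2=(e_\omega,\tilde e_\omega)_\Omega$ turns the duality pairing at the final time into the square of the quantity we wish to bound, $(e_\omega(\cdot,T),\xi_0)_\Omega=\tnorm{(\tilde\omega-\tilde\omega_h)(\cdot,T)}_\delta^2$.

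First I would derive an \emph{exact} error representation. Starting from
\[
\tnorm{\tilde e_\omega(\cdot,T)}_\delta^2-(e_\omega(\cdot,0),\varphi_1(\cdot,0))_\Omega=\int_0^T\big[(\partial_t e_\omega,\varphi_1)_\Omega+(e_\omega,\partial_t\varphi_1)_\Omega\big]\,\mbox{d}t,
\]
I substitute for $(\partial_t e_\omega,\varphi_1)$ the exact and discrete vorticity equations, testing the discrete one with $\pi_L\varphi_1\in V_h$, and for $(e_\omega,\partial_t\varphi_1)$ the adjoint equation \eqref{eq:dual1}. After integrating the convective and diffusive contributions by parts, the terms carried by the exact velocity $u$ and by $\nu\Delta\varphi_1$ cancel against those produced by \eqref{eq:dual1}, leaving: the convective residual $-(u_h\SCAL\nabla\omega_h,\varphi_1-\pi_L\varphi_1)_\Omega$; after elementwise integration by parts and $\Delta\omega_h|_K=0$, the diffusive flux jump $-\sum_F(\nu\jump{\normal_F\SCAL\nabla\omega_h},\varphi_1-\pi_L\varphi_1)_F$; the stabilization term $s(u_h;\omega_h,\pi_L\varphi_1)$ together with the mass-lumping defect $(\partial_t\omega_h,\pi_L\varphi_1)_M-(\partial_t\omega_h,\pi_L\varphi_1)_\Omega$; and the coupling term. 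The purely temporal remainder $(\partial_t\omega_h,\varphi_1-\pi_L\varphi_1)_\Omega$ vanishes by $L^2$-orthogonality, since $\partial_t\omega_h\in V_h$. For the coupling I rewrite $(\mbox{rot } e_\Psi\SCAL\nabla\omega_h,\varphi_1)_\Omega=-(\nabla\omega_h\SCAL\mbox{rot }\varphi_1,e_\Psi)_\Omega$, the face terms dropping because the tangential derivative of $\omega_h$ is continuous, then use \eqref{eq:dual2} to reach $-(\nabla\varphi_2,\nabla e_\Psi)_\Omega$; finally the discrete relation \eqref{NSFEM2}, which holds only against $V_h$, makes the coupling plus the $-(e_\omega,\varphi_2)_\Omega$ term collapse exactly to the Poisson residual $r_\Psi(\varphi_2):=(\nabla\Psi_h,\nabla\varphi_2)_\Omega-(\omega_h,\varphi_2)_\Omega$ of $\Psi_h$.

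Next I would bound each surviving term by the corresponding $\mathcal{R}_i$ times an adjoint weight. The initial pairing exploits $L^2$-orthogonality of $e_\omega(\cdot,0)=\omega_0-\pi_L\omega_0$ to $V_h$ and yields $\mathcal{R}_0$; the convective residual, after subtracting an arbitrary $v_h$ and using $\|h^{-1/2}(\varphi_1-\pi_L\varphi_1)\|\lesssim h^{1/2}\|\nabla\varphi_1\|$ from \eqref{approx1}, yields $\mathcal{R}_1$; the flux jump can be weighted either by $\sup_t\|\nabla\varphi_1\|$ or by the viscous Hessian control $\|\nu^{1/2}D^2\varphi_1\|_Q$ of \eqref{phi_1stab}, the sharper choice producing the factor $\min(h,\nu^{1/2}T^{1/2})$ in $\mathcal{R}_2$; the stream residual $r_\Psi(\varphi_2)$ is weighted by $\int_I\|\nabla\varphi_2\|\,\mbox{d}t$, controlled through \eqref{phi_2stab}, giving the $\int_0^T\|\omega_h\|_{L^\infty(\Omega)}\,\mbox{d}t$ factor and, after the two admissible estimates, the $\min$ in $\mathcal{R}_3$; the mass defect is handled with $|(v_h,w_h)_M-(v_h,w_h)_\Omega|\lesssim(h^2|\nabla v_h|,|\nabla w_h|)_\Omega$ to give $\mathcal{R}_4$; and the stabilization term is bounded by \eqref{stab_bound2} to give $\mathcal{R}_5$. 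Crucially, every adjoint weight is estimated through Proposition \ref{prop:dualstability} with \emph{no inverse power of $\nu$}, contributing the universal factor $e^{T/\tau_F}$ and one power $h^{1/2}$ from interpolating $\varphi_1$, while a single factor $\delta^{-1}$ enters through $\|\nabla\xi_0\|=\|\nabla\tilde e_\omega(\cdot,T)\|\le\delta^{-1}\tnorm{\tilde e_\omega(\cdot,T)}_\delta$. Dividing by $\tnorm{\tilde e_\omega(\cdot,T)}_\delta$ gives \eqref{vorticity_apost} with the common prefactor $(h/\delta^2)^{1/2}$. The main obstacle is exactly the coupled-adjoint cancellation of the second paragraph: the vorticity--stream-function coupling must be unwound using both dual equations and the rot/div identities (the same toolkit as for $I_3$ in Proposition \ref{prop:dualstability}), one must verify that what it leaves is precisely the computable residual $\mathcal{R}_3$, and this step together with all weights must stay free of inverse powers of $\nu$ — this robustness is what the dual stability estimate is designed to deliver.

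Finally, for \eqref{velocity_apost} I would use $u-u_h=\mbox{rot } e_\Psi$, so that $\|(u-u_h)(\cdot,t)\|=\|\nabla e_\Psi(\cdot,t)\|$. Testing the stream-function error relation with $e_\Psi$ gives $\|\nabla e_\Psi\|^2=(e_\omega,e_\Psi)_\Omega-r_\Psi(e_\Psi)$. Using the filter with $\delta=1$, $(e_\omega,e_\Psi)_\Omega=(\nabla\tilde e_\omega,\nabla e_\Psi)_\Omega+(\tilde e_\omega,e_\Psi)_\Omega\lesssim\tnorm{\tilde e_\omega}_1\|\nabla e_\Psi\|$ by the Poincaré inequality, while subtracting a discrete function and applying trace and interpolation estimates with the weights built into $\|\cdot\|_{\Delta,\frac12}$ gives $|r_\Psi(e_\Psi)|\lesssim\|\Psi_h\|_{\Delta,\frac12}\|\nabla e_\Psi\|$. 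Dividing by $\|\nabla e_\Psi\|$ yields \eqref{velocity_apost}, the second term then being bounded a posteriori by \eqref{vorticity_apost} with $\delta=1$.
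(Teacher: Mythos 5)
Your proposal is correct and follows essentially the same route as the paper: duality with $\xi_0=\tilde e_\omega$, Galerkin orthogonality via subtraction of $\pi_L\varphi_1$ (and of a projection of $\varphi_2$), term-by-term bounds against $\mathcal{R}_0$--$\mathcal{R}_5$ weighted by the adjoint stability of Proposition \ref{prop:dualstability}, and the identical stream-function argument for \eqref{velocity_apost}. The only cosmetic differences are that you derive the error representation directly by time transposition rather than through the intermediate consistency relation \eqref{NSpert1}, and that the paper realises your ``two admissible estimates'' for the Poisson residual concretely by choosing $\Pi=\pi_V$ (leaving $(\omega_h,\varphi_2-\pi_V\varphi_2)_Q$, hence the $c_1h^{\frac12}\sup_t\|\omega_h\|$ branch) versus $\Pi=\pi_L$ (leaving $(\nabla\Psi_h,\nabla(\varphi_2-\pi_L\varphi_2))_Q$, hence the $c_0\sup_t\|\Psi_h\|_{\Delta,0}$ branch), which is exactly the mechanism you describe.
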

\begin{proof}
By the definition of $\tilde e_\omega$ we have, taking $\xi_0 = \tilde
e_\omega$ in \eqref{eq:dual3},
\begin{align*}
\tnorm{\tilde e_\omega}_\delta^2 &= (e_\omega(T),\varphi_1(T))_\Omega + (e_\omega,-\partial_t \varphi_1- u \SCAL \nabla
\varphi_1- \nu \Delta \varphi_1)_{Q} \\ & \qquad + (e_{\Psi},-\Delta
\varphi_2-\nabla \omega_h \cdot \mbox{rot }\varphi_1)_{Q}\\
&=(e_\omega(0),\varphi_1(0))_\Omega + (\partial_t e_\omega+u\SCAL\nabla e_\omega+\mbox{rot } e_{\Psi} \SCAL
\nabla \omega_h,\varphi_1)_\Omega + (\nu \nabla e_\omega,\nabla \varphi_1)_\Omega \\ &
\qquad +
(\nabla e_{\Psi},\nabla \varphi_2)_\Omega - (e_\omega,\varphi_2)_\Omega.
\end{align*}
Using now the consistency relation \eqref{NSpert1} we obtain
\begin{align*}
\tnorm{\tilde e_\omega}_\delta^2 &= (e_\omega(0),(\varphi_1-\pi_L \varphi_1)(\cdot,0))_\Omega+ (\partial_t e+u\SCAL\nabla e+\mbox{rot } e_{\Psi} \SCAL
\nabla \omega_h,\varphi_1-\pi_L \varphi_1)_{Q} \\& + (\nu \nabla
e,\nabla (\varphi_1-\pi_L \varphi_1))_{Q} +
(\nabla e_{\Psi},\nabla (\varphi_2-\Pi \varphi_2) )_{Q} -
(e,\varphi_2-\Pi \varphi_2)_{Q}  \\ &
\qquad - (\partial_t
\omega_h,\pi_L \varphi_1)_{M,Q} + (\partial_t
\omega_h,\pi_L \varphi_1)_{Q} - s(u_h,\omega_h;\pi_L \varphi_1),
\end{align*}
where $\Pi:H^1(\Omega) \mapsto V_h^l$ will be taken as either $\pi_L$ or $\pi_V$.
Using the equations \eqref{NSweak1}-\eqref{NSweak2} and the definitions of the projections $\pi_L$ and $\pi_V$
we deduce for $\Pi:= \pi_V$,
\begin{align*}
\tnorm{\tilde e_\omega}_\delta^2 &= (e_\omega(0),(\varphi_1-\pi_L
\varphi_1)(\cdot,0))_\Omega-(u_h \SCAL \nabla \omega_h -v_h, \varphi_1-\pi_L
\varphi_1)_{Q}\\
& \qquad- (\nu \nabla \omega_h, \nabla (\varphi_1 -\pi_L
\varphi_1)_{Q} + (\omega_h, \varphi_2 - \pi_V \varphi_2)_{Q}\\
& \qquad \qquad- (\partial_t
\omega_h,\pi_L \varphi_1)_{M,Q} + (\partial_t
\omega_h,\pi_L \varphi_1)_{Q} - \int_0^T s(u_h,\omega_h;\pi_L \varphi_1)~\mbox{d}t,
\end{align*}
and similarly for $\Pi:=\pi_L$,
\begin{align*}
\tnorm{\tilde e_\omega}_\delta^2 &= (e_\omega(\cdot,0),(\varphi_1-\pi_L
\varphi_1)(\cdot,0))_\Omega-(u_h \SCAL \nabla \omega_h -v_h), \varphi_1-\pi_L
\varphi_1)_{Q}\\
& \qquad- (\nu \nabla \omega_h, \nabla (\varphi_1 -\pi_L
\varphi_1)_{Q} - (\nabla \Psi_h, \nabla(\varphi_2 - \pi_L \varphi_2))_{Q}\\
& \qquad \qquad- (\partial_t
\omega_h,\pi_L \varphi_1)_{M,Q} + (\partial_t
\omega_h,\pi_L \varphi_1)_{Q} - \int_0^T s(u_h,\omega_h;\pi_L \varphi_1)~\mbox{d}t.
\end{align*}
After some standard manipulation including integrations by parts, Cauchy-Schwarz
inequalities, trace inequalities the approximation results \eqref{approx1} and
\eqref{approx2} we may conclude, for $\Pi:=\pi_V$,
\begin{align*}
\tnorm{\tilde e_\omega}_\delta^2 & \lesssim \left(\frac{h}{\delta^2}
\right)^{\frac12} \Bigl(\|e_\omega(\cdot,0)\| + \int_0^T \inf_{v_h \in V_h}
\|h^{\frac12}(u_h \SCAL
  \nabla \omega_h - v_h)\| ~\mbox{d}t \\+ \min(h,&\nu^{\frac12} T^{\frac12}) \|\nu^{\frac12} \jump{\normal_F \SCAL
  \nabla \omega_h}\|_{\F\times I} + c_1 h^{\frac12} \sup_{t\in I} \|\omega_h(\cdot,t)\| \int_0^T \|\omega_h(\cdot,t)\|_{L^\infty(\Omega)} ~\mbox{d}t
\\&+ h^{\frac32} \int_0^T \|\partial_t \nabla
\omega_h\| ~\mbox{d}t +(U_0+ \|u_h\|_{L^\infty(Q)})
\int_0^T s(u_h;\omega_h,\omega_h)^{\frac12} ~\mbox{d}t\Bigr)\\
& \qquad \times (\sup_{t\in I} \|\delta \nabla
\varphi_1(\cdot,t)\| + \|\delta \nu^\frac12 D^2 \varphi_1\|_{Q}).
\end{align*}
If $\Pi:=\pi_L$ the fourth term on the right hand side is replaced using 
$$
 (\nabla \Psi_h, \nabla(\varphi_2 - \pi_L \varphi_2))_{Q}  \lesssim
 \left( \frac{h}{\delta^2} \right)^{\frac12} 
 c_0 \sup_{t \in I} \|\Psi_h(t)\|_{\Delta,0} \int_0^T \|\delta\nabla
\varphi_2(\cdot,t)\| ~\mbox{d}t,
$$
followed by the bound \eqref{phi_2stab} on $\varphi_2$.
The estimate \eqref{vorticity_apost} now follows by taking the minimum
of the two expressions and noting that by \eqref{phi_1stab} 
\[
\sup_{t\in I} \|\delta \nabla
\varphi_1(\cdot,t)\| + \|\delta \nu^\frac12 D^2 \varphi_1\|_{Q}
\lesssim e^{\frac{T}{\tau_F}} \tnorm{\tilde e_\omega}_\delta.
\]
The velocity estimate \eqref{velocity_apost} is obtained by noting
that, with $e_\Psi := \Psi - \Psi_h$,
\begin{equation*}
\|u - u_h\|^2 := \|\nabla e_\Psi \|^2 = (\nabla e_\Psi, \nabla
(e_\Psi - \pi_L e_\Psi)) + (e_\omega, \pi_L e_\Psi).
\end{equation*}
Using the equation \eqref{NSweak2} we have
\begin{multline*}
\|\nabla e_\Psi \|^2 = (\nabla \Psi_h, \nabla
(e_\Psi - \pi_L e_\Psi)) + (\omega,e_\Psi) - (\omega_h, \pi_L e_\Psi)
\\ =  (\nabla \Psi_h, \nabla
(e_\Psi - \pi_L e_\Psi)) + (\omega - \omega_h,e_\Psi) 
\end{multline*}
Let $\tilde e$ be the solution of \eqref{filtering} with
$\delta=1$.
Then
\[
\|u - u_h\|^2 = (\nabla \Psi_h, \nabla
(e_\Psi - \pi_L e_\Psi))_\Omega + (\nabla \tilde e_\omega, \nabla e_\Psi)_\Omega +
(\tilde e_\omega,  e_\Psi)_\Omega.
\]
By an integration by parts in the first term, followed by a Cauchy-Schwarz
inequality and the Poincar\'e-Friedrichs inequality in the last term
we may write
\begin{multline*}
\|u - u_h\|^2 \lesssim \|h_F^{\frac12} \jump{\nabla \Psi_h}\|_{\F}
\|h^{-\frac12}(e_\Psi - \pi_L e_\Psi)\|_{\F} \\[3mm]+ \left(\sum_{K
    \in \mathcal{T}_h} \|h(\Delta \Psi_h -
v_h)\|_K^2\right)^{\frac12} \|h^{-1}(e_\Psi - \pi_L
e_\Psi)\|\\[3mm] 
+ \tnorm{(\tilde \omega -
  \tilde \omega_h) }_1 \|(u - u_h)\|.
\end{multline*}
By elementwise trace inequalities and the approximation property \eqref{approx1} we have
\[
\|h^{-\frac12}(e_\Psi - \pi_L e_\Psi)\|_{\F} + \|h^{-1}(e_\Psi - \pi_L e_\Psi)\|\lesssim \|u -
u_h\|
\]
by which we conclude.
\end{proof}\\
If the stability properties of the stabilized method are sufficient,
these a posteriori error estimates translate into a priori error
estimates. We propose two strategies for this. One using stability concepts based on Sobolev injections for
discrete spaces and one based on
monotonicity, applicable to monotone stabilized finite element methods
and monotone implicit large eddy methods. The advantage of the former is
that it allows the derivation of a
priori error estimates for quasi linear terms
$s(u_h;\omega_h,v_h)$ and the use of the consistent mass matrix. The
latter technique on the other hand allows for the derivation of a
priori error estimates with precise control of the constants in the
estimates. We will use the notion of the discrete maximum principle (DMP)
and the associated, DMP-property of the forms defining a finite
element method introduced in \cite{BE05}.
\begin{proposition}\label{Sobapriori}
Assume that the mass $(\cdot,\cdot)_M$ is evaluated exactly and that
in addition to \eqref{stab_bound1} and \eqref{stab_bound2}
the following stability estimate holds for all $t>0$,
\begin{equation}\label{Sobolevinj}
\|\omega_h\|_{L^\infty(\Omega)} \lesssim c(h) (\|\omega_h\| + s(u_h,\omega_h,\omega_h)^{\frac12}).
\end{equation}
Then there holds for all $\epsilon>0$,
\[
\tnorm{(\tilde \omega - \tilde \omega_h)(T)}_\delta \lesssim
e^{\frac{T}{\tau_F}} \left(\frac{h}{\delta^2}
\right)^{\frac12} (c_\epsilon h^{-\epsilon} + c(h) h^{\frac12})
\]
and
\[
\|(u-u_h)(\cdot,T)\| \lesssim \inf_{v_h \in W_h^{l-1}} \|(u -
v_h)(\cdot,T)\|  + e^{\frac{T}{\tau_F}} h^{\frac12} (c_\epsilon h^{-\epsilon} + c(h) h^{\frac12}).
\]
\end{proposition}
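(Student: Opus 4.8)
The plan is to insert the stability estimates of Lemma~\ref{FEMstability} into the a posteriori bound \eqref{vorticity_apost} and to control each residual $\mathcal{R}_i$ by a robust quantity; since the consistent mass matrix is used, $\mathcal{R}_4$ is absent. The exponential prefactor $e^{\frac{T}{\tau_F}}$ and the filter factor $(h/\delta^2)^{\frac12}$ are carried through unchanged, so the whole task reduces to showing $\sum_i\mathcal{R}_i\lesssim c_\epsilon h^{-\epsilon}+c(h)h^{\frac12}$.

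The terms $\mathcal{R}_0,\mathcal{R}_1,\mathcal{R}_2$ are controlled by enstrophy stability alone. For $\mathcal{R}_0$ one has $\|e_\omega(\cdot,0)\|=\|\omega_0-\pi_L\omega_0\|\leq\|\omega_0\|\lesssim1$. For $\mathcal{R}_1$ the left inequality in \eqref{stab_bound1} gives $\inf_{v_h}\|h^{\frac12}(u_h\SCAL\nabla\omega_h-v_h)\|\lesssim s(u_h;\omega_h,\omega_h)^{\frac12}$, and a Cauchy--Schwarz in time together with $2\int_I s\,\mathrm{d}t\leq\|\omega_h(\cdot,0)\|_M^2\sim1$ from \eqref{enstrophy_stability} yields $\mathcal{R}_1\lesssim1$. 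For $\mathcal{R}_2$, an elementwise trace and inverse inequality gives $\|\nu^{\frac12}\jump{\normal_F\SCAL\nabla\omega_h}\|_{\F\times I}\lesssim h^{-\frac12}\|\nu^{\frac12}\nabla\omega_h\|_Q\lesssim h^{-\frac12}$, again by \eqref{enstrophy_stability}, so that $\mathcal{R}_2\lesssim h\cdot h^{-\frac12}=h^{\frac12}$. The term $\mathcal{R}_3$ is where the Sobolev injection enters: by \eqref{Sobolevinj} and \eqref{enstrophy_stability}, $\int_0^T\|\omega_h\|_{L^\infty(\Omega)}\,\mathrm{d}t\lesssim c(h)\int_0^T(\|\omega_h\|+s(u_h;\omega_h,\omega_h)^{\frac12})\,\mathrm{d}t\lesssim c(h)$, and selecting the second branch of the minimum gives $\mathcal{R}_3\lesssim c(h)\,h^{\frac12}\sup_t\|\omega_h\|\lesssim c(h)h^{\frac12}$.

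The crux is $\mathcal{R}_5=(U_0+\|u_h\|_{L^\infty(Q)})\int_0^T s(u_h;\omega_h,\omega_h)^{\frac12}\,\mathrm{d}t$, where the time integral is once more $O(1)$ but the factor $\|u_h\|_{L^\infty(Q)}$ must be bounded robustly. Here I would combine \eqref{ubound_infty} with the two--dimensional inverse inequality $\|\omega_h\|_{L^q(\Omega)}\lesssim h^{2/q-1}\|\omega_h\|_{L^2(\Omega)}$: fixing $q=2/(1-\epsilon)>2$ makes $2/q-1=-\epsilon$, so that $\|u_h\|_{L^\infty(\Omega)}\leq c_q\|\omega_h\|_{L^q(\Omega)}\lesssim c_\epsilon h^{-\epsilon}\|\omega_h\|_{L^2(\Omega)}$, and $\sup_t\|\omega_h\|\lesssim1$ from \eqref{enstrophy_stability} gives $\|u_h\|_{L^\infty(Q)}\lesssim c_\epsilon h^{-\epsilon}$, hence $\mathcal{R}_5\lesssim c_\epsilon h^{-\epsilon}$. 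I expect this step to be the main obstacle, because it must avoid the non--robust route through $\|\nabla\omega_h\|$, which carries a factor $\nu^{-\frac12}$; the point is that only the robustly controlled $L^2$ norm of the vorticity is used, at the price of the arbitrarily small loss $h^{-\epsilon}$ with a constant $c_\epsilon$ that degenerates as $q\to2^+$. Collecting the bounds on $\mathcal{R}_0$--$\mathcal{R}_3$ and $\mathcal{R}_5$ in \eqref{vorticity_apost} yields the first estimate.

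For the velocity I would start from the a posteriori bound \eqref{velocity_apost}. Its second term $\tnorm{(\tilde\omega-\tilde\omega_h)(\cdot,T)}_1$ is the weak--norm vorticity error with $\delta=1$, which the first estimate bounds by $e^{\frac{T}{\tau_F}}h^{\frac12}(c_\epsilon h^{-\epsilon}+c(h)h^{\frac12})$, i.e.\ the second term in the claim. It remains to bound the stream--function residual $\|\Psi_h(\cdot,T)\|_{\Delta,\frac12}$ by the best--approximation error $\inf_{v_h\in W_h^{l-1}}\|(u-v_h)(\cdot,T)\|$. The face term $\jump{\normal_F\SCAL\nabla\Psi_h}$ is precisely the jump of the tangential component of the piecewise polynomial velocity $u_h=\mbox{rot }\Psi_h\in W_h^{l-1}$, and together with the interior Laplacian residual it measures only the non--conformity of $u_h$; since $u=\mbox{rot }\Psi$ is continuous its tangential jumps vanish, so by the standard argument combining this vanishing with elementwise trace and inverse inequalities both contributions are bounded by $\inf_{v_h\in W_h^{l-1}}\|(u-v_h)(\cdot,T)\|$. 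Assembling the two contributions gives the velocity estimate.
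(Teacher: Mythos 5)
Your treatment of the vorticity estimate is correct and coincides with the paper's own proof in every detail: $\mathcal{R}_0$--$\mathcal{R}_2$ from enstrophy stability \eqref{enstrophy_stability} (with the left inequality of \eqref{stab_bound1} and Cauchy--Schwarz in time for $\mathcal{R}_1$, an elementwise trace inequality for $\mathcal{R}_2$), the second branch of the minimum in $\mathcal{R}_3$ combined with \eqref{Sobolevinj}, the omission of $\mathcal{R}_4$ for the consistent mass, and for $\mathcal{R}_5$ the bound \eqref{ubound_infty} followed by the two-dimensional inverse inequality $\|\omega_h\|_{L^q}\lesssim h^{2/q-1}\|\omega_h\|$, with $\epsilon=(q-2)/q$ and $c_q$ degenerating as $q\to 2^+$ --- this is exactly the paper's argument.

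The velocity estimate, however, contains a genuine gap. You route it through \eqref{velocity_apost} and claim $\|\Psi_h(\cdot,T)\|_{\Delta,\frac12}\lesssim\inf_{v_h\in W_h^{l-1}}\|(u-v_h)(\cdot,T)\|$ ``by the standard argument.'' That bound is not available: subtracting a discrete competitor does not remove the jump, since every $w_h=\nabla v_h\in W_h^{l-1}$ has a jumping normal component $\jump{n_F\SCAL w_h}\ne 0$ across faces (only the tangential component is continuous), so the jump of $n_F\SCAL\nabla\Psi_h$ cannot be transferred onto $u-v_h$. The only continuous comparison function is $\nabla\Psi$ itself, and elementwise trace/inverse inequalities then yield at best $\|\Psi_h\|_{\Delta,\frac12}\lesssim\|u-u_h\|+\inf_{v_h}\|(u-v_h)\|+h\|\omega\|$ --- the \emph{efficiency} of the residual estimator, not a bound by the best-approximation error. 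Inserting this into the reliability bound \eqref{velocity_apost} is circular: the generic constant in front of $\|u-u_h\|$ cannot be absorbed. This is precisely why the paper does not use \eqref{velocity_apost} in this proposition. Instead it exploits the Galerkin orthogonality of the discrete stream-function equation \eqref{NSFEM2} together with the Cl\'ement interpolant $C_h$, writing $\|\nabla e_\Psi\|^2=(\nabla e_\Psi,\nabla(\Psi-C_h\Psi))-(e_\omega,\Psi_h-C_h\Psi)$, then substituting $e_\omega=-\Delta\tilde e_\omega+\tilde e_\omega$ (filter width $\delta=1$) to bound the second term by $\tnorm{\tilde\omega-\tilde\omega_h}_1\,\|C_h\Psi-\Psi_h\|_{H^1(\Omega)}$, and concluding via $H^1$-stability of $C_h$, a Poincar\'e inequality, and division by $\|u-u_h\|$. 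If you insist on going through \eqref{velocity_apost}, the residual must be bounded \emph{a priori} rather than by the error: letting $\Psi^*$ solve $-\Delta\Psi^*=\omega_h$, the function $\Psi_h$ is the Ritz projection of $\Psi^*$, whence $\|\Psi_h\|_{\Delta,\frac12}\lesssim h\|\omega_h\|\lesssim h$ by quasi-optimality and elliptic regularity --- but that argument again rests on \eqref{NSFEM2}, which your sketch never invokes.
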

\begin{proof}
First we recall that $\|\omega_h(\cdot,0)\| \leq \|\omega(\cdot,0)\|$. Then
by \eqref{stab_bound1} and \eqref{enstrophy_stability}
\[
\int_0^T
\inf_{v_h \in V_h}\|h^{\frac12} (u_h \SCAL
  \nabla \omega_h - v_h)\|~\mbox{d}t \lesssim T^{\frac12} \Bigl(\int_0^T
  s(u_h;\omega_h,\omega_h) ~\mbox{d}t\Bigr)^{\frac12} \lesssim T^{\frac12} \|\omega(\cdot,0)\|.
\]
Using an elementwise trace inequality and \eqref{enstrophy_stability} we also have 
\[
\min(h,\nu^{\frac12}T^{\frac12}) \|\nu^{\frac12} \jump{\normal_F \SCAL  \nabla \omega_h}\|_{\F\times I}
\leq h^{\frac12} \|\nu^{\frac12} \nabla \omega_h\|_{Q}  \lesssim h^{\frac12} \|\omega(\cdot,0)\|.
\]
For $\mathcal{R}_3$ we use the discrete Sobolev injection
\eqref{Sobolevinj}
to deduce
\begin{multline*}
h^{\frac12} \sup_{t \in I} \|\omega_h(\cdot,t)\| \int_0^T
\|\omega_h(\cdot,t)\|_{L^\infty(\Omega)} ~\mbox{d}t \\ \lesssim h^{\frac12}
\|\omega_h(\cdot,0)\| c(h) \int_0^T(\|\omega_h\|+
s(u_h,\omega_h,\omega_h)^{\frac12}) ~\mbox{d}t\\
 \lesssim  h^{\frac12} c(h)  \|\omega_h(\cdot,0)\|^2.
\end{multline*}
The only remaining term is the stabilization term, which is not
innocent since we do not have an a priori bound on the
factor $\|u_h\|_{L^\infty(Q)}$. Here we use \eqref{ubound_infty} to
deduce, for all $t>0$ and $q>2$,
\[
\|u_h\|_{L^\infty(\Omega)}
\int_0^T s(u_h;\omega_h,\omega_h)^{\frac12} ~\mbox{d} t \leq c_q
\|\omega_h\|_{L^q(\Omega)} T^{\frac12} \Bigl( \int_0^T s(u_h;\omega_h,\omega_h) ~\mbox{d} t\Bigr)^{\frac12}
\]
and by a global inverse inequality and the bound
\eqref{enstrophy_stability} we may conclude
\[
\|u_h\|_{L^\infty(Q)}
\int_0^T s(u_h;\omega_h,\omega_h)^{\frac12} ~\mbox{d} t \lesssim c_q
h^{\frac{2-q}{q}}  T^{\frac12} \sup_{t \in I}
\|\omega_h(\cdot,t)\|\|\omega_h(\cdot,0)\|
\]
and the estimate follows taking $\epsilon = (q-2)/q$. Note that the
constant $c_q$ explodes as $q \rightarrow 2$.

The bound in the $L^2$-norm for the velocities follows as before from the
vorticity estimate using, with $C_h$ denoting the Cl\'ement interpolant,
\begin{align*}
\|u - u_h\|^2 & = \|\nabla e_\Psi \|^2 = (\nabla  e_\Psi , \nabla (\Psi -
C_h \Psi)) - (e_\omega, \Psi_h - C_h \Psi) \\[3mm]
& \leq \|u - u_h\| \|\nabla (\Psi -C_h \Psi)\| - (-\Delta \tilde
e_\omega  + \tilde
e_\omega, (C_h \Psi - \Psi_h)) \\[3mm]
&
\leq \|u - u_h\| \|\nabla (\Psi -C_h \Psi)\| + \tnorm{\tilde
\omega - \tilde \omega_h}_1 \|C_h \Psi - \Psi_h\|_{H^1(\Omega)}.
\end{align*}
We conclude by using the $H^1$-stability of the Cl\'ement interpolant, a Poincar\'e inequality and finally by dividing both sides with $\|u - u_h\|$.
\end{proof}\\
\begin{proposition}\label{DMPapriori}(A priori error estimate using monotonicity)
Assume that $Re_h>1$, that the mass $(\cdot,\cdot)_M$ is evaluated
using nodal quadrature, that the form $a(u_h;\omega_h,v_h)+s(u_h;\omega_h,v_h)$
has the DMP property as defined in \cite{BE05} and that
\eqref{stab_bound1}-\eqref{stab_bound2} are satisfied as well as the
assumptions of Lemma \ref{FEMstability}
Then there holds
\[
\tnorm{(\tilde \omega - \tilde \omega_h)(T)}_\delta \lesssim
e^{\frac{T}{\tau_F}} \left( \frac{h}{\delta^2} \right)^{\frac12}
\]
and
\[
\|(u-u_h)(\cdot,T)\| \lesssim \inf_{v_h \in  W^{l-1}_h} \|(u -
v_h)(\cdot,T)\| +  e^{\frac{T}{\tau_F}} h^{\frac12}.
\]
\end{proposition}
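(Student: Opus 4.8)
The plan is to reuse the a posteriori estimate of Theorem~\ref{est:a_posteriori}, bounding each residual $\mathcal{R}_i$ under the monotonicity (DMP) hypothesis. Let me think about what the DMP property buys us that the Sobolev-injection route in Proposition~\ref{Sobapriori} did not.

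Let me look at the terms. The residuals $\mathcal{R}_0,\mathcal{R}_1,\mathcal{R}_2$ are controlled exactly as in Proposition~\ref{Sobapriori}: $\mathcal{R}_0 = \|e_\omega(\cdot,0)\|$ is $O(1)$ since $\omega_0 \in L^\infty$, $\mathcal{R}_1 \lesssim T^{1/2}(\int_0^T s\,dt)^{1/2} \lesssim T^{1/2}\|\omega(\cdot,0)\|$ by \eqref{stab_bound1} and the enstrophy stability \eqref{enstrophy_stability}, and $\mathcal{R}_2 \le h^{1/2}\|\nu^{1/2}\nabla\omega_h\|_Q \lesssim h^{1/2}\|\omega(\cdot,0)\|$ again by \eqref{enstrophy_stability}. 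So these three contribute at the required $O(1)$ level.

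The payoff of the DMP hypothesis is the uniform $L^\infty$ bound on $\omega_h$. The first key step is to invoke the discrete maximum principle for the form $a(u_h;\omega_h,v_h)+s(u_h;\omega_h,v_h)$: since this form has the DMP property of \cite{BE05} and the mass matrix is lumped (nodal quadrature, so the mass term is diagonal and positive), the nodal values of $\omega_h$ at time $t$ are bounded by their initial values, giving $\|\omega_h(\cdot,t)\|_{L^\infty(\Omega)} \le \|\omega_h(\cdot,0)\|_{L^\infty(\Omega)} \le \|\omega_0\|_{L^\infty(\Omega)} \sim 1$ for all $t$. This is exactly the $L^\infty$-control flagged in the remark after Proposition~\ref{prop:dualstability} as the only thing really needed; it is far stronger and cleaner than the discrete Sobolev injection \eqref{Sobolevinj}, because it removes the $h^{-\epsilon}$ and $c(h)$ factors. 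With this bound in hand, $\mathcal{R}_3 \lesssim \int_0^T\|\omega_h\|_{L^\infty}\,dt \cdot c_1 h^{1/2}\sup_t\|\omega_h\| \lesssim h^{1/2}$, and in the stabilization term $\mathcal{R}_5$ one replaces the dangerous factor $\|u_h\|_{L^\infty(Q)}$ — which previously forced the use of \eqref{ubound_infty} and produced the $h^{-\epsilon}$ — directly by its $O(1)$ bound via \eqref{ubound_infty} with the $L^\infty$-controlled $\omega_h$ (or more simply by the uniform $L^\infty$-vorticity bound), so $\mathcal{R}_5 \lesssim (\int_0^T s\,dt)^{1/2} \lesssim \|\omega(\cdot,0)\|$ after Cauchy--Schwarz and \eqref{enstrophy_stability}. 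The term $\mathcal{R}_4$ is present here since the mass is lumped; I would bound $\int_0^T\|\partial_t\nabla\omega_h\|\,dt$ using \eqref{timederstab} from Lemma~\ref{FEMstability}, then control its right-hand side through $\int_0^T s^{1/2}\,dt$ and $\int_0^T\nu\|\nabla\omega_h\|\,dt$ via \eqref{enstrophy_stability}, checking that the resulting $h$-powers combine with the $h^{3/2}$ prefactor of $\mathcal{R}_4$ to leave an $O(1)$ contribution.

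Assembling, every $\mathcal{R}_i$ is $O(1)$ uniformly in $\nu$ and in Sobolev norms of the exact solution, so \eqref{vorticity_apost} yields $\tnorm{(\tilde\omega-\tilde\omega_h)(T)}_\delta \lesssim e^{T/\tau_F}(h/\delta^2)^{1/2}$, the first claim. The velocity estimate then follows from \eqref{velocity_apost}: taking $\delta=1$ controls the filtered vorticity term, while the term $\|\Psi_h(\cdot,T)\|_{\Delta,1/2}$ is converted into a best-approximation error $\inf_{v_h\in W_h^{l-1}}\|(u-v_h)(\cdot,T)\|$ exactly as in the velocity argument inside Proposition~\ref{Sobapriori}, using the Cl\'ement interpolant and its $H^1$-stability. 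The main obstacle is the correct handling of $\mathcal{R}_4$ under mass lumping: one must verify that the $h^{-3/2}$ and $\nu h^{-2}$ factors appearing in \eqref{timederstab} are tamed by the enstrophy bound (which supplies $\nu^{1/2}$ and $s^{1/2}$ factors, not bare gradients) so that the product with $h^{3/2}$ stays bounded independently of $\nu$; this is the one place where the interplay of the stabilization scaling \eqref{stab_bound1} and the quadrature error is delicate, and where verifying the DMP property for the chosen nonlinear $s$ (deferred to \cite{BE05}) is essential for the $L^\infty$ bound that makes everything else collapse to $O(1)$.
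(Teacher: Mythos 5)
Your overall route is the paper's route: bound $\mathcal{R}_0$--$\mathcal{R}_2$ exactly as in Proposition~\ref{Sobapriori}, use the DMP property together with the lumped mass matrix to get $\|\omega_h\|_{L^\infty(Q)} = \|\omega_h(\cdot,0)\|_{L^\infty(\Omega)}$, deduce $\|u_h\|_{L^\infty(Q)} \lesssim \|\omega_h(\cdot,0)\|_{L^\infty(\Omega)}$ from \eqref{ubound_infty}, and then collapse $\mathcal{R}_3$ and $\mathcal{R}_5$ to $O(1)$ via \eqref{enstrophy_stability}; the velocity bound via the Cl\'ement interpolant is also the paper's argument (note the paper does not route it through \eqref{velocity_apost} and $\|\Psi_h\|_{\Delta,\frac12}$, but repeats the Cl\'ement computation from Proposition~\ref{Sobapriori} verbatim --- a cosmetic difference).

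However, there is one genuine unfinished step, and you flagged it yourself without resolving it: the bound on $\mathcal{R}_4$. Inserting \eqref{timederstab}, the convective part is fine ($h^{\frac32}\cdot h^{-\frac32}(U_0+\|u_h\|_{L^\infty(\Omega)})\,s^{\frac12}$ is handled by the DMP bound, Cauchy--Schwarz in time and \eqref{enstrophy_stability}), but the viscous part gives $h^{\frac32}\cdot \nu h^{-2}\|\nabla\omega_h\| = \nu h^{-\frac12}\|\nabla\omega_h\|$, while the enstrophy estimate only controls $\nu^{\frac12}\|\nabla\omega_h\|$ in $L^2(Q)$. The leftover factor is $\nu^{\frac12}h^{-\frac12}$, which is \emph{not} $O(1)$ in general: your ``check that the $h$-powers combine'' fails for fixed $\nu$ as $h\to 0$. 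This is exactly where the standing hypothesis $Re_h = U_0 h/\nu > 1$ --- which your proposal states nowhere and never uses --- must be invoked: it gives $\nu^{\frac12}h^{-\frac12} \le U_0^{\frac12}$, so that
\begin{equation*}
h^{\frac32}\int_0^T \nu h^{-2}\|\nabla\omega_h\|~\mbox{d}t
\;\le\; U_0^{\frac12}\int_0^T \nu^{\frac12}\|\nabla\omega_h\|~\mbox{d}t
\;\lesssim\; T^{\frac12} U_0^{\frac12}\,\|\omega(\cdot,0)\|,
\end{equation*}
which is the paper's bound. The paper's remark after the proposition confirms that $Re_h>1$ is used in this proof only, and only for this non-consistent mass term; without it your assembly of the $\mathcal{R}_i$ does not close, so you should make this step explicit rather than leave it as a verification to be done.
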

\begin{proof}
The terms $\mathcal{R}_0-\mathcal{R}_2$ are bounded as in the  proof of Proposition
\ref{Sobapriori}.
Since by assumption the spatial discretization of \eqref{NSFEM1} has the
DMP property and the mass-matrix is evaluated using nodal quadrature, we know from \cite{BE03,BE05} that 
\[
\|\omega_h\|_{L^\infty(Q)} = \|\omega_h(\cdot,0)\|_{L^\infty(\Omega)}.
\]
Hence by \eqref{ubound_infty} $\|u_h\|_{L^\infty(Q)} \leq c_\infty
\|\omega_h(\cdot,0)\|_{L^\infty(\Omega)}$. We may then use these
$L^\infty$-bounds together with the stabilities of Lemma
\ref{FEMstability} to upper bound the remaining residual quantities of
\eqref{vorticity_apost}.
Using \eqref{ubound_infty} and \eqref{enstrophy_stability} we
immediately have
\[
h^{\frac12} \sup_{t \in I} \|\omega_h(\cdot,t)\| \int_0^T
\|\omega_h(\cdot,t)\|_{L^\infty(\Omega)} ~\mbox{d}t \lesssim h^{\frac12} T \|\omega_h(\cdot,0)\|\|\omega_h(\cdot,0)\|_{L^\infty(\Omega)}
\]
For the residual term resulting from the mass lumping we have using
the stability \eqref{timederstab}
\begin{multline*}
h^{\frac32} \int_0^T \|\partial_t \nabla \omega_h\| ~\mbox{d}t
\lesssim T^{\frac12} (U_0+\|u_h\|_{L^\infty(Q)})\Bigl(\int_0^T (s(u_h;\omega_h,\omega_h)
+ \|\nu^{\frac12} \nabla \omega_h\|^2)~\mbox{d}t\Bigr)^{\frac12} \\
\leq
T^{\frac12} U_0^{\frac12} \|\omega(\cdot,0)\|.
\end{multline*}
The remaining contribution from the stabilization is bounded as before
using the maximum principle and \eqref{enstrophy_stability}. The proof
of the 
$L^2$-norm estimate on the velocities is identical to that of
Proposition \ref{Sobapriori}
\end{proof}\\
Note that only the proof of Proposition \ref{DMPapriori} uses the
assumption $Re_h>1$ and only to control the non-consistent mass
term. This constraint is likely to vanish if the method is analysed
using techniques appropriate for parabolic problems, since mass lumping
is known to be stable for dominant diffusion (see for instance
\cite{Thom97}).
\section{Stabilized finite element methods}
The estimates of Theorem \ref{est:a_posteriori} holds for any finite
element method on the form \eqref{NSFEM1}-\eqref{NSFEM2}. Indeed by
taking $s(\cdot;\cdot,\cdot) \equiv 0$ the standard Galerkin method is
included. This means that in general the effect of stabilization can be observed
only a posteriori, by observing smaller residuals for the stabilized
formulations. In Propositions \ref{DMPapriori} and \ref{Sobapriori} we propose a priori
estimates derived from the a posteriori error estimates under special 
assumptions on the properties of the stabilizing terms.
These
can be proven to hold only for stabilized finite element methods, since the standard
Galerkin method does not allow for a control of the second term of the
right hand side of \eqref{vorticity_apost} independently of the
viscosity, nor can \eqref{stab_bound1} and \eqref{Sobolevinj} be
proven to hold. In this section we will suggest some stabilization
operators that satisfy the assumptions necessary for the results of
the abstract analysis to hold. We will consider the following cases:
\begin{enumerate}
\item linear artificial viscosity, in which the numerical viscosity is
  increased so that the mesh Reynolds number always is one. Using a
  lumped mass matrix together with
  anisotropic viscosity we may design the scheme to satisfy a discrete maximum
  principle, giving a priori control of
  $\|\omega_h\|_{L^\infty(Q)}$. When the consistent mass matrix is
  used one may proved that \eqref{Sobolevinj} holds giving once again
  a priori estimates,
at the price of a logarithmic factor.
\item high order stabilization, we propose to stabilize the
  jump of the streamline derivative. This scheme does not yield a
  maximum principle, so the residuals can not be completely a priori
  bounded. 
The scheme has some interesting conservation properties for
  two-dimensional Navier-Stokes' computations that we will point out.
If a nonlinear stabilization term is added and mass-lumping is used
 the solution may be made monotone and the a priori error estimate of
 Proposition \ref{DMPapriori} holds, this time with the possibility of higher
order convergence in the smooth portion of the flow. Finally if the
consistent mass matrix is used and stabilization is added also in the crosswind direction, an estimate of
the type \eqref{Sobolevinj} can be shown to hold leading to a priori
error bounds using Proposition \eqref{Sobapriori}.
\end{enumerate}
\subsection{Methods using consistent mass matrix}
We consider first the stabilization method obtained by penalizing the
jumps of the streamline derivative over element faces. We use the
exact mass matrix in \eqref{NSFEM1} and the stabilizing operator
\begin{equation}\label{CIP}
s_{sd}(u_h,\omega_h,v_h):= \gamma \sum_{F \in \F} U_0^{-1} ( h^2_F \jump{ u_h \SCAL \nabla \omega_h} ,\jump{ u_h \SCAL \nabla v_h})_{F}.
\end{equation}
For this formulation the following stability estimates hold
\begin{lemma}
\begin{equation}\label{enstrophy_stability_CIP}
\sup_{t \in I} \|\omega_h(\cdot,t)\|^2 + 2\|\nu^{\frac12} \nabla
\omega_h\|^2_{Q} + 2\gamma U_0^{-1} \|h_F \jump{u_h \SCAL \nabla \omega_h}\|_{\F}^2 \leq \|\omega_h(\cdot,0)\|^2
\end{equation}
and if the consistent mass matrix is used, 
\begin{equation}\label{energy_stability_CIP}
\|u_h(\cdot,T)\|^2 + 2\|\nu^{\frac12}
\omega_h\|^2_{Q} = \|u_h(\cdot,0)\|^2
\end{equation}
\end{lemma}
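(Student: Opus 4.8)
The plan is to read off both relations from the abstract stability result of Lemma~\ref{FEMstability}, specialised to $s=s_{sd}$ and to the consistent mass matrix, for which $\|\cdot\|_M$ coincides with the $L^2$-norm $\|\cdot\|$. The only genuinely new point is the \emph{energy consistency} of the streamline-jump penalty, i.e.\ that its velocity contribution vanishes; everything else is a transcription of the earlier argument.

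To obtain \eqref{enstrophy_stability_CIP} I would test \eqref{NSFEM1} with $v_h=\omega_h$. The convective part of $a(u_h;\omega_h,\omega_h)$ vanishes by the divergence-freeness of $u_h$ (its normal component is continuous since $\mbox{rot }\Psi_h\SCAL\normal_F$ is the tangential derivative of the continuous $\Psi_h$) together with periodicity, leaving $\tfrac12\frac{d}{dt}\|\omega_h\|^2+\nu\|\nabla\omega_h\|^2+s_{sd}(u_h;\omega_h,\omega_h)=0$. Evaluating the penalty on the diagonal gives $s_{sd}(u_h;\omega_h,\omega_h)=\gamma U_0^{-1}\|h_F\jump{u_h\SCAL\nabla\omega_h}\|_{\F}^2\ge 0$, so integrating in time and taking the supremum over $t\in I$ yields \eqref{enstrophy_stability_CIP} exactly as in \eqref{enstrophy_stability}.

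The identity \eqref{energy_stability_CIP} is the energy balance \eqref{energy_stability} with $\|\cdot\|_M=\|\cdot\|$, so the whole content reduces to showing that its stabilisation contribution $\int_0^T s_{sd}(u_h;\omega_h,\Psi_h)\,\mbox{d}t$ is zero; this is the step I expect to be the crux. The key observation is that the streamline derivative of the stream function vanishes pointwise: since $u_h=\mbox{rot }\Psi_h=(\partial_y\Psi_h,-\partial_x\Psi_h)$ is orthogonal to $\nabla\Psi_h$ on each element, $u_h\SCAL\nabla\Psi_h\equiv 0$, hence $\jump{u_h\SCAL\nabla\Psi_h}=0$ on every face and $s_{sd}(u_h;\omega_h,\Psi_h)=\gamma U_0^{-1}\sum_{F\in\F}(h_F^2\jump{u_h\SCAL\nabla\omega_h},\jump{u_h\SCAL\nabla\Psi_h})_F=0$. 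With this the balance of Lemma~\ref{FEMstability} collapses to \eqref{energy_stability_CIP}. For the left-hand side I would, exactly as in that proof, test \eqref{NSFEM1} with $v_h=\Psi_h$ (the same orthogonality and div-freeness again kill the convective term), rewrite the viscous term as $\nu\|\omega_h\|^2$ using \eqref{NSFEM2} with $\Phi_h=\omega_h$, and identify $(\partial_t\omega_h,\Psi_h)=\tfrac12\frac{d}{dt}\|u_h\|^2$ by differentiating \eqref{NSFEM2} in time and testing with $\Phi_h=\Psi_h$; integrating over $(0,T)$ then closes the argument.

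Since none of these steps involves the Reynolds number or any regularity of the exact solution, the main (and essentially only) obstacle is recognising the energy consistency $s_{sd}(u_h;\omega_h,\Psi_h)=0$, which is precisely what makes this jump-penalty compatible with the discrete energy conservation reflected in \eqref{energy_stability_CIP}.
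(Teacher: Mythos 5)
Your proposal is correct and takes essentially the same route as the paper: \eqref{enstrophy_stability_CIP} is obtained directly from \eqref{enstrophy_stability} and the definition \eqref{CIP}, and \eqref{energy_stability_CIP} rests on exactly the paper's key observation that $s_{sd}(u_h;\omega_h,\Psi_h)=0$ because $u_h\SCAL\nabla\Psi_h=\mbox{rot }\Psi_h\SCAL\nabla\Psi_h\equiv 0$, so every face jump $\jump{u_h\SCAL\nabla\Psi_h}$ vanishes. The only difference is presentational: you unfold the proof of Lemma \ref{FEMstability} (testing with $\omega_h$ and $\Psi_h$, using \eqref{NSFEM2} with $\Phi_h=\omega_h$ for the viscous term and its time derivative with $\Phi_h=\Psi_h$ for the kinetic energy), where the paper simply cites that lemma.
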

\begin{proof}
the proof of \eqref{enstrophy_stability_CIP} is an immediate
consequence of \eqref{enstrophy_stability} and the definition
\eqref{CIP}.
The inequality \eqref{energy_stability_CIP} follows by observing that
\[
s_{sd}(u_h,\omega_h,\Psi_h) =  \gamma \sum_{F \in \F} ( U_0^{-1} h^2_F \jump{ u_h \SCAL
  \nabla \omega_h} ,\jump{ \mbox{rot } \Psi_h \SCAL \nabla
  \Psi_h})_{F} = 0.
\]
\end{proof}\\
Observe that the method dissipates enstrophy but conserves energy
exactly as the physics of the problem suggests. Using known results on
interpolation between discrete spaces it is also straightforward to
show (see \cite{BH04}),
\[
\inf_{v_h \in V_h} \|h^{\frac12}(u_h \cdot \nabla \omega_h - v_h)\|^2 \lesssim s_{sd}(u_h,\omega_h,\omega_h).
\]
Unfortunately this
stabilization operator can not be shown to satisfy
\eqref{Sobolevinj}. For this we need the stabilization to act also in
the crosswind direction. We therefore propose the following two
stabilization operators, the first is the standard artificial
viscosity method 
\begin{equation}\label{s_av}
s_{av}(u_h;\omega_h,\omega_h):=(\gamma h (U_0 + |u_h|)^2 U_0^{-1} \nabla
\omega_h,\nabla v_h)
\end{equation}
and the second is a modification of \eqref{CIP} where
also the crosswind gradient is penalized 
defined by
\begin{equation} 
s_{cd}(u_h,\omega_h,v_h) := s_{sd}(u_h,\omega_h,v_h) +
\gamma_1 \sum_{K\in \mathcal{T}_h} U_0 h_K^\mu \int_{\partial K}
\jump{n_F \cdot\nabla \omega_h}\jump{n_F \cdot \nabla v_h }
~\mbox{d}s \label{s_cd}
\end{equation}
Observe that the first part of $s_{cd}$ ensures the
satisfaction of \eqref{stab_bound1} and as we shall see the second
part is necessary for \eqref{Sobolevinj} to hold.
\begin{proposition}
Both stabilization operators \eqref{s_av} and \eqref{s_cd} satisfy
\eqref{stab_bound1} and \eqref{stab_bound2}.
The stabilization operator $s_{av}(\cdot;\cdot,\cdot)$ satisfy
\eqref{Sobolevinj} with $c(h) \sim h^{-\frac12} (1+ |log(h)|)$ and $s_{cd}(\cdot;\cdot,\cdot)$
satisfy \eqref{Sobolevinj} with $c(h) \sim h^{-\tfrac{1+\mu}{4}}(1+
|log(h)|)$, $\mu>0$.
\end{proposition}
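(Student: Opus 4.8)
The plan is to treat the two structural bounds \eqref{stab_bound1}--\eqref{stab_bound2} as routine consequences of trace and inverse inequalities, and to concentrate the real work on the discrete Sobolev injection \eqref{Sobolevinj}, whose power of $h$ encodes how strongly each operator controls the vorticity gradient.

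For $s_{av}$ the bounds \eqref{stab_bound1}--\eqref{stab_bound2} are essentially immediate. Since $s_{av}(u_h;\omega_h,\omega_h)=\gamma U_0^{-1}\|h^{\frac12}(U_0+|u_h|)\nabla\omega_h\|^2$ and $U_0\sim1$, the upper bound in \eqref{stab_bound1} follows from the pointwise estimate $(U_0+|u_h|)\le U_0+\|u_h\|_{L^\infty(\Omega)}$, while its lower bound is obtained by taking $v_h=0$ and using $\|h^{\frac12}u_h\SCAL\nabla\omega_h\|\le\|h^{\frac12}(U_0+|u_h|)\nabla\omega_h\|$; then \eqref{stab_bound2} is a single Cauchy--Schwarz inequality in the weight $h(U_0+|u_h|)^2$. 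For $s_{cd}$ I would use $s_{cd}\ge s_{sd}$ together with the interpolation estimate of \cite{BH04} (quoted after \eqref{energy_stability_CIP}) to obtain the lower bound in \eqref{stab_bound1}; the upper bounds in \eqref{stab_bound1}--\eqref{stab_bound2} follow by applying the discrete trace inequality $\|w_h\|_F\lesssim h_F^{-\frac12}\|w_h\|_K$ to the piecewise-constant quantities $u_h\SCAL\nabla\omega_h$ and $\nabla\omega_h$ and summing over faces, crucially using that continuity of $\omega_h\in V^1_h$ forces the tangential part of $\jump{\nabla\omega_h}$ to vanish, so that $\jump{\normal_F\SCAL\nabla\omega_h}$ is the \emph{entire} gradient jump. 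At this point one must track the $h$-scaling of $\sum_{K}h_K^\mu\int_{\partial K}|\jump{\normal_F\SCAL\nabla\omega_h}|^2$ against $h(U_0+\|u_h\|_{L^\infty(\Omega)})^2\|\nabla\omega_h\|^2$; this comparison is exactly where the admissible range of the crosswind exponent $\mu$ is fixed, and I would state that constraint explicitly rather than leave it implicit.

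The heart of the proof is \eqref{Sobolevinj}, for which I would invoke the two-dimensional discrete Sobolev inequality $\|v_h\|_{L^\infty(\Omega)}\lesssim(1+|\log h|)^{\frac12}\|v_h\|_{H^1(\Omega)}$, valid on quasi-uniform meshes. For $s_{av}$ the argument is direct: because $(U_0+|u_h|)^2\ge U_0^2\sim1$, the operator is coercive on the full gradient, $\|\nabla\omega_h\|\lesssim h^{-\frac12}s_{av}(u_h;\omega_h,\omega_h)^{\frac12}$, and inserting this into the discrete Sobolev inequality gives $\|\omega_h\|_{L^\infty(\Omega)}\lesssim(1+|\log h|)^{\frac12}h^{-\frac12}(\|\omega_h\|+s_{av}(u_h;\omega_h,\omega_h)^{\frac12})$, which is within the stated $c(h)\sim h^{-\frac12}(1+|\log h|)$. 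For $s_{cd}$ the situation is genuinely different, since the crosswind penalty controls only the jumps of the gradient, $\|h^{\frac\mu2}\jump{\normal_F\SCAL\nabla\omega_h}\|_{\F}\lesssim s_{cd}(u_h;\omega_h,\omega_h)^{\frac12}$, and a smooth interpolant may have tiny jumps yet a large $\|\nabla\omega_h\|$; one therefore cannot bound the full gradient and must instead use a discrete Sobolev inequality whose second ingredient is the jump seminorm rather than $\|\nabla v_h\|$. Propagating the weight $h^{\frac\mu2}$ through this embedding is what yields the stated $c(h)\sim h^{-\frac{1+\mu}{4}}(1+|\log h|)$.

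The main obstacle is precisely this jump-seminorm discrete Sobolev inequality for $s_{cd}$: one must show that $\|\omega_h\|_{L^\infty(\Omega)}$ is controlled, up to the logarithmic factor and the correct power of $h$, by $\|\omega_h\|$ together with the face jumps of $\nabla\omega_h$ alone. I expect this to require an $L^p$-interpolation argument (the embedding $W^{1,p}\hookrightarrow L^\infty$ in two dimensions for $p>2$ combined with inverse estimates, optimised over $p$ to generate the logarithm), together with a discrete identity converting the elementwise-constant gradient into its weighted face jumps; checking that this bookkeeping produces exactly the exponent $(1+\mu)/4$ is the delicate step. A secondary point to keep honest is the compatibility of the resulting range of $\mu$ with the upper bound in \eqref{stab_bound1} flagged above.
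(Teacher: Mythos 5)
Your treatment of \eqref{stab_bound1}--\eqref{stab_bound2} and of the $s_{av}$ case of \eqref{Sobolevinj} is correct and essentially the paper's argument: Cauchy--Schwarz and trace inequalities for the structural bounds, coercivity of $s_{av}$ on the full gradient at the price of $h^{-\frac12}$, and Scott's two-dimensional discrete Sobolev inequality $\|\omega_h\|_{L^\infty(\Omega)}\lesssim(1+|\log h|)\|\omega_h\|_{H^1(\Omega)}$ to conclude. Your flag that the upper bound in \eqref{stab_bound1} for $s_{cd}$ constrains the admissible range of $\mu$ (the crosswind term scales like $h^{\mu-1}\|\nabla\omega_h\|^2$, so the stated $h\|\nabla\omega_h\|^2$ bound needs $\mu\geq 2$) is a legitimate observation that the paper's one-line remark glosses over.

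However, for the $s_{cd}$ case of \eqref{Sobolevinj} --- the only genuinely nontrivial content of the proposition --- your proposal has a real gap, and the route you sketch to fill it is misdirected. You assert that one ``cannot bound the full gradient'' by the jump seminorm and must therefore construct a new jump-based discrete Sobolev embedding via $W^{1,p}\hookrightarrow L^\infty$ optimised over $p$, and you leave the exponent $(1+\mu)/4$ as an unverified expectation. In fact the full $L^2$ gradient \emph{is} bounded by the jumps, with exactly the allowed $h$-loss, by an elementary mechanism you do not use: since $\omega_h\in V_h^1$ is piecewise affine, $\Delta\omega_h$ vanishes elementwise, so integration by parts on each element gives the identity
\[
\|\nabla\omega_h\|^2=\sum_{F\in\F}\int_F \jump{n_F\SCAL\nabla\omega_h}\,\omega_h~\mbox{d}s .
\]
A Cauchy--Schwarz inequality on each face with weights $h^{\mp\mu/2}$ and the trace inequality $\|\omega_h\|_{\partial K}\lesssim h^{-\frac12}\|\omega_h\|_K$ then yield
\[
\|\nabla\omega_h\|^2\lesssim h^{-\frac{1+\mu}{2}}\,\|\omega_h\|\,s_{cd}(u_h;\omega_h,\omega_h)^{\frac12}
\lesssim h^{-\frac{1+\mu}{2}}\bigl(\|\omega_h\|+s_{cd}(u_h;\omega_h,\omega_h)^{\frac12}\bigr)^2,
\]
hence $\|\nabla\omega_h\|\lesssim h^{-\frac{1+\mu}{4}}(\|\omega_h\|+s_{cd}(u_h;\omega_h,\omega_h)^{\frac12})$. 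This controls the full $H^1$-norm, after which the \emph{same} logarithmic Sobolev inequality already used for $s_{av}$ applies verbatim; the logarithm comes from that inequality, not from any $L^p$-interpolation, and the exponent $(1+\mu)/4$ falls out of this two-line $L^2$ computation rather than from delicate bookkeeping. You circle the right object (``a discrete identity converting the elementwise-constant gradient into its weighted face jumps'') but never land it, and the $L^p$ detour you propose in its place is both unnecessary and, as written, not a proof.
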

\begin{proof}
The proofs of \eqref{stab_bound1} - \eqref{stab_bound2} are
consequences of the Cauchy-Schwarz inequality and in the case of
$s_{cd}$ trace inequalities. To prove \eqref{Sobolevinj} we note that
in two space dimensions there holds (see \cite{Sco76}),
\[
\|\omega_h\|_{L^{\infty}(\Omega)} \lesssim (1+|\log(h)|) \|\omega_h\|_{H^1(\Omega)}.
\]
This allows us to conclude for $s_{av}$. For $s_{cd}$ we use that
\[
\|\nabla \omega_h\| \leq \Bigl(\sum_{F\in\F} \int_{F}
|\jump{\nabla \omega_h \cdot n_{F}}| |\omega_h|
~\mbox{d}s \Bigr)^{\frac12}.
\]
A Cauchy-Schwarz inequality followed by a trace inequality in the
right hand side leads to
\[
\|\nabla \omega_h\| \lesssim \Bigl(\sum_{K \in \mathcal{T}_h} h^{-\frac{1+\mu}{2}}
\|\omega_h\|_K \|h^{\frac{\mu}{2}} \jump{n_f \cdot \nabla
  \omega_h}\|_{\partial K}\Bigr)^{\frac12} 
\lesssim h^{-\frac{1+\mu}{4}} (\|\omega_h\| + s_{cd}(u_h;\omega_h,\omega_h)^{\frac12}).
\]
\end{proof}\\
Since the assumptions of Proposition \ref{Sobapriori} are satisfied, we may
conclude that the method \eqref{NSFEM1}-\eqref{NSFEM2} using the
stabilization \eqref{s_av} statisfy the a priori error bounds for $\epsilon>0$
\[
\tnorm{(\tilde \omega - \tilde \omega_h)(T)}_\delta \lesssim
e^{\frac{T}{\tau_F}} \left(\frac{h}{\delta^2}
\right)^{\frac12} (c_\epsilon h^{-\epsilon} + 1 + |\log(h)|)
\]
and
\[
\|(u-u_h)(\cdot,T)\| \lesssim \inf_{v_h \in  W^{l-1}_h} \|(u -
v_h)(\cdot,T)\|  + e^{\frac{T}{\tau_F}} \left(\frac{h}{\delta^2}
\right)^{\frac12} (c_\epsilon h^{-\epsilon} + 1 + |\log(h)|).
\]
Similarly we have the following estimates if the stabilization
\eqref{s_cd} is used.
\[
\tnorm{(\tilde \omega - \tilde \omega_h)(T)}_\delta \lesssim
e^{\frac{T}{\tau_F}} \left(\frac{h}{\delta^2}
\right)^{\frac12} (c_\epsilon h^{-\epsilon} + (1 + |\log(h)|) h^{-\frac{\mu-1}{4}})
\]
and
\[
\|(u-u_h)(\cdot,T)\| \lesssim \inf_{v_h \in  W^{l-1}_h} \|(u -
v_h)(\cdot,T)\|  + e^{\frac{T}{\tau_F}} h^{\frac12} (c_\epsilon h^{-\epsilon} + (1 + |\log(h)|) h^{-\frac{\mu-1}{4}}).
\]
We see that if we take $\mu=1$ in \eqref{s_cd} we get the same order
for the two methods, however if we want the method to have optimal
convergence for smooth solutions we choose $\mu=2$ and $l=2$, resulting in an
a priori convergence order of $O(h^{\frac14})$ in the non-smooth case.
\subsection{Monotone methods}
Since the consistent mass matrix is non-monotone we herein only
consider methods using lumped mass. Monotone methods can also be designed
using a nonlinear switch that changes the local quadrature as a
function of the solution $\omega_h$ so that the consistent mass is
used away from local extrema to reduce the dispersion error known to
haunt mass-lumping schemes, such methods are beyond the
scope of the present paper.
\subsubsection{Linear artificial viscosity}
A monotone method using linear artificial viscosity is obtained by
taking (see \cite{BE05})
\begin{equation}\label{s:art_visc}
s(u_h,\omega_h,v_h):= \gamma \sum_K  (\max(U_0,\|u_h\|_{L^\infty(K)})
h^2_K \sum_{F \in \partial K} (\nabla \omega_h
\times \normal_F ,\nabla v_h \times \normal_F)_{F}.
\end{equation}
Then the estimates
\eqref{enstrophy_stability}-\eqref{energy_stability} hold
and we observe that there exists positive constants $c_1,c_2$ such that
\[
c_1 \||u_h|^\frac12 h^\frac12 \nabla \omega_h\|^2_{\Omega} \leq
s(u_h,\omega_h,\omega_h) \leq c_2 \||u_h|^\frac12 h^\frac12 \nabla \omega_h\|^2_{\Omega}.
\]
Let the mass matrix be evaluated using nodal quadrature so that the
matrix corresponding to $(\cdot,\cdot)_M$ is diagonal. We may use the
theory of \cite{BE03, BE05} to prove that the operator
$a(\omega_h,v_h)+s(u_h,\omega_h,v_h)$ has the DMP-property and hence the following
discrete maximum principle holds
\[
\|\omega_h\|_{L^\infty(Q)} = \|\omega_h(\cdot,0)\|_{L^\infty(\Omega)}.
\] 
This requires the parameter $\gamma$ to be chosen large enough,
however it does not require any additional acute condition on the
mesh, since the discretization of the Laplace operator results in an M-matrix on Delaunay meshes.
Since by the maximum principle, $\|u_h\|_{L^\infty(Q)} \lesssim
\|\omega_h(\cdot,0)\|_{L^\infty(\Omega)}$ we have
\begin{equation}\label{grad_bound}
\||u_h| h^\frac12 \nabla \omega_h\|^2_{Q} \lesssim \|u_h\|^{\frac12}_{L^\infty(Q)} 
\int_0^Ts(u_h,\omega_h,\omega_h)~\mbox{d}t \lesssim \|\omega_h(\cdot,0)\|_M^2
\end{equation}
which proves \eqref{stab_bound1} with $v_h = 0$. It is straightforward
to prove also \eqref{stab_bound2}.
Comparing with Proposition \ref{DMPapriori} we conclude that the
assumptions are satisfied and hence that the Proposition holds for
\eqref{NSFEM1}-\eqref{NSFEM2} with stabilization given by
\eqref{s:art_visc} and the mass matrix evaluated using nodal
quadrature.
\subsubsection{Nonlinear artificial viscosity}
Here we assume that $l=1$ so that both $\omega_h$ and $\Psi_h$ are
discretized using piecewise affine elements. We propose a stabilization term consisting of one linear part and
one nonlinear part. The role of the nonlinear part is to ensure that
the form $a(\cdot;\cdot,\cdot)+s(\cdot;\cdot,\cdot)$  has the DMP
property. The linear part is necessary to ensure that
the inequality \eqref{stab_bound1} holds. We define
\begin{align}\label{nonlin1}
s(u_h;\omega_h,v_h) & := s_{sd}(u_h;\omega_h,v_h) \\
&+
\gamma_2 \sum_K h^2
 \sum_{F \in \partial K}  R_F(u_h,\omega_h) (\mbox{sign}(\nabla \omega_h
\times \normal_F ),\nabla v_h \times \normal_F)_{F}
\end{align}
where 
\[
R_F(u_h,\omega_h) := \|u_h\|_{L^\infty(\Delta_F)} (1+U_0^{-1} \|u_h\|_{L^\infty(\Delta_F)})m_F(\jump{n_F
  \cdot \nabla \omega_h}) 
\]
with $\Delta_F := \cup_{K \in \mathcal{T}_h; K\cap F \ne  \emptyset}
K$ and
\[
m_F(\jump{n_F
  \cdot \nabla \omega_h}) = \max_{\substack{ F' \in \F \\ F'
    \in \partial K'; K' \cap F = F}} \|\jump{n_F
  \cdot \nabla \omega_h}\|_{F'}.
\]
 It
is shown in \cite{BE05} that with this definition
$a(u_h;\omega_h,v_h) + s(u_h;\omega_h,v_h)$ has the DMP-property for
$\gamma_2$ large enough. 
Since the bounds \eqref{stab_bound1}-\eqref{timederstab} also hold, 
the assumptions of Proposition \ref{DMPapriori} are
satisfied and its estimates hold.
We conclude that for the methods defined by mass lumping and the
stabilization operators \eqref{s:art_visc} or \eqref{nonlin1} the
following estimates hold
\[
\tnorm{(\tilde \omega - \tilde \omega_h)(T)}_\delta \lesssim
e^{\frac{T}{\tau_F}} \left(\frac{h}{\delta^2}
\right)^{\frac12}
\]
and
\[
\|(u-u_h)(\cdot,T)\| \lesssim \inf_{v_h \in  W^{l-1}_h} \|(u -
v_h)(\cdot,T)\|  + e^{\frac{T}{\tau_F}} \left(\frac{h}{\delta^2}
\right)^{\frac12}.
\]
\section{Conclusion}
We have shown that under a certain structural assumption on the
solution of the two dimensional Navier-Stokes' equation one may derive
robust error estimates with an order in $h$, independent of both
the Reynolds number and high order Sobolev norms of the exact
solution. Robustness is obtained for a class of stabilized finite
element methods. The estimates are
both on a posteriori form, and on
a priori form, providing an upper bound on the error. Due to the
strong assumptions on the mesh the present a posteriori error
estimates are not immediately suitable for use in adaptive algorithms,
but a more detailed analysis may allow the mesh assumptions to be
relaxed.
If the solution
is smooth we also prove that optimal convergence may be obtained,
provided the stabilization operator is weakly consistent to the right order.

Observe that it is natural that the LES estimate has much poorer
convergence order, since we may assume no smoothness of the exact
solution. Even the large scales are assumed to have moderate gradients
only.

We show how several stabilized methods enter the framework, both first
and second order accurate ones. The interest of the first order
artificial viscosity method is primarily its close relationship to the 
vertex centered finite volume method. Note also that the estimates
with an order proposed herein for nonlinear monotone schemes to the
best of our knowledge are the first of their kind in the literature.

It appears that for implicit large eddy simulations 
both the
estimate \eqref{classic_estimate} for smooth solutions and the
estimate \eqref{ILES_estimate} for rough solutions derived herein are
desirable properties for the theoretical justification of a method.

Future work will focus on
numerical investigations both in two and three space dimensions. Of
particular interest is to study the stability of the incompressible
Euler equations to see if the limit estimate with no allowed small
scales is sharp.
\section*{Acknowledgment} Partial funding for this research was
provided by EPSRC
(Award number EP/J002313/1).
\appendix
\section{Proof of Proposition \ref{smooth_apriori}}
We introduce the discrete errors, with $I_h$ denoting
the Lagrange interpolant
\[
e_{h,\psi}:= \Psi_h - I_h \Psi \mbox{ and } e_{h,\omega}:= \omega_h -
\pi_L \omega.
\]
First consider the second equation \eqref{NSFEM2} and use Galerkin
orthogonality
\[
\|\nabla e_{h,\Psi}\|^2 = (\nabla (\Psi - I_h \Psi),\nabla e_{h,\Psi}) - (\omega - \omega_h,e_{h,\Psi}).
\]
Applying Poincar\'es inequality followed by Cauchy Schwarz inequality
we obtain the following bound for $\Psi$ in terms of the error in the vorticity
\[
\|\nabla e_{h,\Psi}\| \lesssim \|\nabla (\Psi - I_h \Psi)\| + \|\omega -
\pi_L \omega_h\| + \|e_{h,\omega}\|.
\]
Consider now the equation \eqref{NSFEM1} taking $v_h = e_{h,\omega}$
and observing that there holds
\begin{multline*}
\frac12 \frac{\mathrm{d}}{\mathrm{dt}} \|e_{h,\omega}\|^2 + s(u_h;e_{h,\omega},e_{h,\omega}) \\
= (\partial_t (\omega - \pi_L \omega), e_{h,\omega}) + (\omega, u \cdot \nabla e_{h,\omega})\\ - (\pi_L \omega, u_h \cdot \nabla e_{h,\omega}) - s(u_h;\pi_L \omega,e_{h,\omega}).
\end{multline*}
By integration by parts in time we see that the first term on the right hand 
side is zero, by the orthogonality of the $L^2$-projection. We then add and subtract $u_h$ in the second term on the right hand side to obtain
\begin{multline*}
\frac12 \frac{\mathrm{d}}{\mathrm{dt}} \|e_{h,\omega}\|^2 +s(u_h;e_{h,\omega},e_{h,\omega}) 
=  (\omega, (u - u_h) \cdot \nabla e_{h,\omega})\\ + (\omega - \pi_L \omega, u_h \cdot \nabla e_{h,\omega}) - s(u_h;\pi_L \omega,e_{h,\omega}) = I + II + III.
\end{multline*}
In the first term on the right hand side we now reintegrate by parts and use 
Cauchy-Schwarz inequality,
\[\begin{aligned}
I & \leq \|\omega\|_{W^{1,\infty}} \|u - u_h\| \|e_{h,\omega}\| \\
  & \lesssim \|\omega\|_{W^{1,\infty}} (\|\nabla (\Psi - I_h \Psi)\| + \|\omega - \omega_h\|)\|e_{h,\omega}\| \\
& \lesssim  \|\omega\|_{W^{1,\infty}} (\|\nabla (\Psi - I_h \Psi)\|^2+  \|\omega - \pi_L \omega\|^2 + \|e_{h,\omega}\|^2).
\end{aligned}
\]
In the second term we use the orthogonality of the
$L^2$-projection to retract some function $v_h$ and then apply \eqref{stab_bound1},
\[\begin{aligned}
II & =  (\omega - \pi_L \omega, u_h \cdot \nabla e_{h,\omega} - v_h)\\
& \leq c \|h^{-\frac12} (\omega - \pi_L \omega)\| s(u_h;e_{h,\omega},e_{h,\omega})^\frac12\\
& \leq  c h^{2s-1} \|\omega\|_{H^s}^2 +  \frac14 s(u_h;e_{h,\omega},e_{h,\omega}).
\end{aligned}
\]
For the stabilization term finally we apply the Cauchy-Schwarz
inequality and an arithmetic-geometric inequality to obtain
\begin{equation}
III = s(u_h;\pi_L \omega,e_{h,\omega})  \leq s(u_h;\pi_L \omega,\pi_L \omega) + 
\frac14 s(u_h;e_{h,\omega},e_{h,\omega}).
\end{equation}
Then we observe that by adding and subtracting $I_h \mbox{rot }
  \Psi$ we may write
\[
s(u_h;\pi_L \omega,\pi_L \omega) \lesssim s(u_h-I_h \mbox{rot }
  \Psi;\pi_L \omega,\pi_L \omega) + s(I_h \mbox{rot }
  \Psi;\pi_L \omega,\pi_L \omega)
\]
and using the definition \eqref{CIP} and the stability of the
$L^2$-projection on quasi uniform meshes, we have, 
\begin{align*}
s(u_h-I_h \mbox{rot }
  \Psi;\pi_L \omega,\pi_L \omega) & \lesssim \sum_{F \in \F} \int_F h^2 |u_h-I_h \mbox{rot }
  \Psi|^2 |\nabla \pi_L \omega_h|^2 ~\mbox{d}s  \\ &\lesssim \|\nabla
  \omega\|_{L^\infty(\Omega)}^2 h \|u_h-I_h \mbox{rot }
  \Psi\|^2\\
\lesssim \|\nabla
  \omega\|_{L^\infty(\Omega)}^2 h (\|\nabla \Psi - I_h \nabla \Psi)\|^2& + \|\nabla (\Psi - I_h \Psi)\|^2  +
  \|\omega - \pi_L \omega\|^2 + \|e_{h,\omega}\|^2)
\end{align*}
and then
\[
\begin{aligned}
s(I_h \mbox{rot }
  \Psi& ; \pi_L \omega, \pi_L \omega)\\
&\leq \|I_h \mbox{rot }
  \Psi\|^2_{L^\infty(Q)} \sum_K h_K \left(\|\nabla (\omega - \pi_L \omega)\|^2_K+h_K^2\|\nabla (\omega - \pi_L \omega)\|^2\right)\\
&\leq \|u\|^2_{L^\infty(Q)} C h_K^{2s-1} \| \omega \|_{L^2(I;H^s(\Omega))}^2.
\end{aligned}
\]
We conclude by collecting the upper bounds for the terms $I-III$,
applying approximability and Gronwall's lemma that
\begin{align*}
\|e_{h,\omega}(\cdot,T)\|^2 & +\int_0^T s(u_h;e_{h,\omega},e_{h,\omega})
\lesssim \exp(c T \| \omega\|_{W^{1,\infty}(Q)} ) \|
\omega\|_{W^{1,\infty}(Q)} \\ & \times  (h^{2l} \|\Psi\|^2_{L^2(I;H^{l+1}(\Omega)}
+ h^{2k+1} \|\omega\|^2_{L^2(I;H^{2}(\Omega))}).
\end{align*}
Here we assumed $ h \| \omega\|_{W^{1,\infty}(Q)} \lesssim 1$ and neglected
the dependence of $ \|u\|^2_{L^\infty(Q)}$ (that is upper bounded by $\|\omega(\cdot,0)\|^2_{L^\infty(\Omega)}$).
It follows that for $l=2$ and sufficiently smooth solutions we have
\[
\|(\omega - \omega_h)(\cdot,T)\| + \|(u - u_h)(\cdot,T)\| \lesssim h^{\frac32}.
\]

\bibliographystyle{plain}
\bibliography{Biblio}

\begin{thebibliography}{10}

\bibitem{AND08}
A.~Aspden, N.~Nikiforakis, S.~Dalziel, and J.~B. Bell.
\newblock Analysis of implicit {LES} methods.
\newblock {\em Commun. Appl. Math. Comput. Sci.}, 3:103--126, 2008.

\bibitem{Bo90}
J.~P. Boris.
\newblock On large eddy simulation using subgrid turbulence models comment 1.
\newblock In J.~L. Lumley, editor, {\em Whither Turbulence? Turbulence at the
  Crossroads}, Lecture Notes in Physics, page 344–353. Berlin Springer
  Verlag, 1990.

\bibitem{Bu09}
E.~Burman.
\newblock A posteriori error estimation for interior penalty finite element
  approximations of the advection-reaction equation.
\newblock {\em SIAM J. Numer. Anal.}, 47(5):3584--3607, 2009.

\bibitem{Bu12}
E.~Burman.
\newblock Computability of filtered quantities for the {B}urgers' equation.
\newblock Technical Report arXiv:1111.1182, 2012.

\bibitem{BE03}
E.~Burman and A.~Ern.
\newblock The discrete maximum principle for stabilized finite element methods.
\newblock In Franco Brezzi, Annalisa Buffa, Stefania Corsaro, and Almerico
  Murli, editors, {\em Numerical Mathematics and Advanced Applications}, pages
  557--566. Springer Milan, 2003.

\bibitem{BE05}
E.~Burman and A.~Ern.
\newblock Stabilized {G}alerkin approximation of convection-diffusion-reaction
  equations: discrete maximum principle and convergence.
\newblock {\em Math. Comp.}, 74(252):1637--1652 (electronic), 2005.

\bibitem{BF07}
E.~Burman and M.~A. Fern{\'a}ndez.
\newblock Continuous interior penalty finite element method for the
  time-dependent {N}avier-{S}tokes equations: space discretization and
  convergence.
\newblock {\em Numer. Math.}, 107(1):39--77, 2007.

\bibitem{BH04}
E.~Burman and P.~Hansbo.
\newblock Edge stabilization for {G}alerkin approximations of
  convection-diffusion-reaction problems.
\newblock {\em Comput. Methods Appl. Mech. Engrg.}, 193(15-16):1437--1453,
  2004.

\bibitem{DJ04}
A.~Dunca and V.~John.
\newblock Finite element error analysis of space averaged flow fields defined
  by a differential filter.
\newblock {\em Math. Models Methods Appl. Sci.}, 14(4):603--618, 2004.

\bibitem{DJL04}
A.~Dunca, V.~John, and W.~Layton.
\newblock Approximating local averages of fluid velocities: {T}he equilibrium
  {N}avier-{S}tokes equations.
\newblock {\em Appl. Numer. Math.}, 49(2):187--205, 2004.

\bibitem{HS90}
P.~Hansbo and A.~Szepessy.
\newblock A velocity-pressure streamline diffusion finite element method for
  the incompressible {N}avier-{S}tokes equations.
\newblock {\em Comput. Methods Appl. Mech. Engrg.}, 84(2):175--192, 1990.

\bibitem{Hoff05}
J.~Hoffman.
\newblock Computation of mean drag for bluff body problems using adaptive
  {DNS}/{LES}.
\newblock {\em SIAM J. Sci. Comput.}, 27(1):184--207 (electronic), 2005.

\bibitem{HJ06}
J.~Hoffman and C.~Johnson.
\newblock Stability of the dual {N}avier-{S}tokes equations and efficient
  computation of mean output in turbulent flow using adaptive {DNS}/{LES}.
\newblock {\em Comput. Methods Appl. Mech. Engrg.}, 195(13-16):1709--1721,
  2006.

\bibitem{HMSW99}
P.~Houston, J.~A. Mackenzie, E.~S{\"u}li, and G.~Warnecke.
\newblock A posteriori error analysis for numerical approximations of
  {F}riedrichs systems.
\newblock {\em Numer. Math.}, 82(3):433--470, 1999.

\bibitem{JS86}
C.~Johnson and J.~Saranen.
\newblock Streamline diffusion methods for the incompressible {E}uler and
  {N}avier-{S}tokes equations.
\newblock {\em Math. Comp.}, 47(175):1--18, 1986.

\bibitem{KTW12}
J.~Kent, J.~Thuburn, and N.~Wood.
\newblock Assessing implicit large eddy simulation for two-dimensional flow.
\newblock {\em Quarterly Journal of the Royal Meteorological Society},
  138(663):365--376, 2012.

\bibitem{LS00}
J.-G. Liu and C.-W. Shu.
\newblock A high-order discontinuous {G}alerkin method for 2{D} incompressible
  flows.
\newblock {\em J. Comput. Phys.}, 160(2):577--596, 2000.

\bibitem{MRG06}
L.~G. Margolin, W.~J. Rider, and F.~F. Grinstein.
\newblock Modeling turbulent flow with implicit {LES}.
\newblock {\em J. Turbul.}, 7:Paper 15, 27 pp. (electronic), 2006.

\bibitem{Maz09}
V.~Maz'ya.
\newblock On the boundedness of first derivatives for solutions to the
  {N}eumann-{L}aplace problem in a convex domain.
\newblock {\em J. Math. Sci. (N. Y.)}, 159(1):104--112, 2009.
\newblock Problems in mathematical analysis. No. 40.

\bibitem{Pope04}
S.~B. Pope.
\newblock Ten questions concerning the large-eddy simulation of turbulent
  flows.
\newblock {\em New Journal of Physics}, 6(1):35, 2004.

\bibitem{PCH10}
J.~Principe, R.~Codina, and F.~Henke.
\newblock The dissipative structure of variational multiscale methods for
  incompressible flows.
\newblock {\em Comput. Methods Appl. Mech. Engrg.}, 199(13-16):791--801, 2010.

\bibitem{RS82}
R.~Rannacher and R.~Scott.
\newblock Some optimal error estimates for piecewise linear finite element
  approximations.
\newblock {\em Math. Comp.}, 38(158):437--445, 1982.

\bibitem{Sco76}
R.~Scott.
\newblock Optimal {$L\sp{\infty }$} estimates for the finite element method on
  irregular meshes.
\newblock {\em Math. Comp.}, 30(136):681--697, 1976.

\bibitem{Thom97}
V.~Thom{\'e}e.
\newblock {\em {G}alerkin finite element methods for parabolic problems},
  volume~25 of {\em Springer Series in Computational Mathematics}.
\newblock Springer-Verlag, Berlin, 1997.

\end{thebibliography}
\end{document}